\documentclass[11pt]{article}
\usepackage[utf8]{inputenc}
\usepackage[english]{babel}
\usepackage{cmap}

\usepackage{amsmath, amsthm, amsfonts, amssymb}

\usepackage{mathtools}

\usepackage{hyperref}
\usepackage{cleveref}

\newtheorem*{thm*}{Theorem}
\newtheorem*{lem*}{Lemma}

\newtheorem{thm}{Theorem}[section]

\newtheorem{lem}[thm]{Lemma}
\newtheorem{prop}[thm]{Proposition}
\theoremstyle{definition}

\theoremstyle{remark}

\def\FF{\mathbb{F}}

\title{Infinite series of Deza graphs with strongly regular children}

\author{Mikhail P.\ Golubyatnikov}

\date{}

\begin{document}

\maketitle

\begin{center}
	{\itshape
		Krasovskii Institute of Mathematics and Mechanics UB RAS, 16 S. Kovalevskaya Str.,
		Yekaterinburg, 620077, Russia\\
		Ural Mathematical Center, 16 S. Kovalevskaya Str., Yekaterinburg, 620077, Russia\\
		Ural Federal University named after the First President of Russia B. N. Yeltsin, Ekaterinburg, Russia
	}\\[4pt]
	\href{mailto:mike_ru1@mail.ru}{\texttt{mike\_ru1@mail.ru}}
\end{center}

\vspace{6pt}

\begin{abstract}
	A graph $\Gamma$ is called a \emph{Deza graph} with parameters $(n, k, b, a)$ if it has exactly $n$ vertices, is $k$-regular, and for any two distinct vertices $u$ and $v$, the number of common neighbors of $u$ and $v$ is either $a$ or $b$.
	The graphs $\Delta_1$ and $\Delta_2$, which have the same vertex set as $\Gamma$, and in which two vertices are adjacent if they have $a$ or $b$ common neighbours, respectively, are called the children of the Deza graph.
	If, for a Deza graph $\Gamma$, both $\Delta_1$ and $\Delta_2$ are strongly regular graphs, then $\Gamma$ is called a \emph{strongly Deza graph}.
	In this work, we present a construction of an infinite family of strongly Deza graphs, for which the children $\Delta_1$ and $\Delta_2$ are strongly regular graphs with the same parameters as the graphs $NO^{\varepsilon \perp}_n(5)$ and $\overline{NO^{\varepsilon \perp}_n(5)}$.
\end{abstract}

\medskip

\noindent\textbf{Keywords:} Deza graph, strongly Deza graph, strictly Deza graph, strongly regular graph, association scheme, quadratic form, finite field.

\smallskip

\noindent\textbf{MSC\,2020:} 05E30.

\medskip
		
	\section{Introduction}

		Strongly regular graphs form one of the central objects in algebraic graph theory and combinatorial design theory.
		A $k$-regular graph on $n$ vertices is called \emph{strongly regular} with parameters $(n,k,\lambda,\mu)$ if every two adjacent vertices share exactly $\lambda$ common neighbours and every two non-adjacent vertices share exactly $\mu$ common neighbours.
		This notion was introduced by Bose~\cite{Bose_srg} and has since found deep connections with association schemes, finite geometry, coding theory, and group theory; see the monographs~\cite{BVM, BCN} for a comprehensive treatment.

		Deza graphs were introduced by Erickson, Fernando, Haemers, Hardy, and Hemmeter~\cite{Deza_1} as a natural generalization of strongly regular graphs.
		A graph $\Gamma$ on $n$ vertices is a \emph{Deza graph} with parameters $(n,k,b,a)$ if it is $k$-regular and, for every pair of distinct vertices $u$ and $v$, the number of their common neighbours belongs to the set $\{a,b\}$, where $0 \le a \le b \le k$.
		The crucial difference from strongly regular graphs is that the value of $|\Gamma(u) \cap \Gamma(v)|$ need not depend on whether $u$ and $v$ are adjacent.
		In~\cite{Deza_1}, several constructions of Deza graphs were presented and the basic theory was developed, including the notion of \emph{children} of a Deza graph.
		Specifically, if $A$ is the adjacency matrix of a Deza graph $\Gamma$, then $A^2 = kI + aB_1 + bB_2$, 	where $B_1$ and $B_2$ are symmetric $(0,1)$-matrices with zero diagonal summing to $J - I$.
		The graphs $\Delta_1$ and $\Delta_2$ with adjacency matrices $B_1$ and $B_2$ are called the children of $\Gamma$.
		
		A Deza graph is called \emph{strictly Deza} if it has diameter~$2$ and is not strongly regular.
		The structure and classification of strictly Deza graphs have been investigated in a number of works.
		Deza graphs with parameters $(n,k,k,a)$ were classified in~\cite{Deza_1}.
		For parameters $(n,k,k-1,a)$, let $\beta$ denote the number of vertices having $k-1$ common neighbours with a given vertex.
		Kabanov, Maslova, and Shalaginov~\cite{KMS} classified strictly Deza graphs with parameters $(n,k,k-1,a)$ and $\beta > 1$.
		Goryainov, Haemers, Kabanov, and Shalaginov~\cite{GHKS} completed the classification by characterizing the remaining case $\beta = 1$ and showed that many of such graphs arise from strongly regular graphs via the dual Seidel switching operation.
		The spectral theory of Deza graphs was developed by Akbari, Ghodrati, Hosseinzadeh, Kabanov, Konstantinova, and Shalaginov~\cite{Spectra_strongly_Deza}, who obtained relations between the eigenvalues of a Deza graph and those of its children.

		A Deza graph whose both children are strongly regular is called a \emph{strongly Deza graph}.
		This notion was studied systematically by Akbari, Haemers, Hosseinzadeh, Kabanov, Konstantinova, and Shalaginov~\cite{Spectra_strongly_Deza}, who gave a spectral characterization of strongly Deza graphs, established relationships between their eigenvalues, and investigated those that are distance-regular.
		An important subclass of strongly Deza graphs is formed by divisible design graphs, introduced by Haemers, Kharaghani, and Meulenberg~\cite{DDG}, in which the vertex set admits a partition into classes with the property that the number of common neighbours of two vertices depends only on whether they belong to the same class.

		Methods for constructing strongly Deza graphs from strongly regular graphs by means of dual Seidel switching and its generalizations were described in~\cite{Deza_1} and further developed by Kabanov, Konstantinova, and Shalaginov~\cite{Kab}, who presented a general approach applicable when the initial strongly regular graph possesses a suitable involutory automorphism.
		Despite this progress, relatively few infinite families of strongly Deza graphs are known, especially those with explicitly described strongly regular children.

		In this paper, we construct a new infinite family of strongly Deza graphs using association schemes arising from non-degenerate quadratic forms over finite fields of odd characteristic.
		More precisely, for every odd $n \ge 5$ we construct a symmetric association scheme on the set of projective points of norm~$1$ in $GF(5)^n$, and show that a certain relation of this scheme defines a Deza graph whose children are strongly regular graphs with the same parameters as the graphs $NO^{\varepsilon\perp}_n(5)$ and their complements, where $NO^{\varepsilon\perp}_n(q)$ denotes a class of strongly regular graphs defined on non-isotropic points of an orthogonal polar space~\cite{BVM, BCN}.
		
		Out main result is the following theorem:
		\begin{thm*}
		Let $n>3$ be an odd integer and let $\varepsilon\in\{-1,1\}$. Then there exists an edge-regular Deza graph with parameters
		$$
		\left(
		\frac{5^{n-1} + \varepsilon 5^{\frac{n-1}{2}}}{2}, 
		5^{n-2} - \varepsilon 5^{\frac{n-3}{2}}, 
		2\bigl(5^{n-3} - \varepsilon 5^{\frac{n-3}{2}}\bigr), 
		2 \cdot 5^{n-3} - \varepsilon 5^{\frac{n-3}{2}}
		\right).
		$$
	
		Moreover, its two Deza children are strongly regular graphs with
		parameters
		$$
		\left(
		\frac{5^{n-1} + \varepsilon 5^{\frac{n-1}{2}}}{2}, 
		\frac{5^{n-2} - \varepsilon 5^{\frac{n-3}{2}}}{2}, 
		\frac{5^{n-3} + \varepsilon 5^{\frac{n-3}{2}}}{2}, 
		\frac{5^{n-3} - \varepsilon 5^{\frac{n-3}{2}}}{2}
		\right)
		$$
		and 
		$$
		\left(
		\frac{5^{n-1} + \varepsilon 5^{\frac{n-1}{2}}}{2},\;
		2\cdot 5^{n-2} + 3\varepsilon 5^{\frac{n-3}{2}}-1,\;
		8\cdot 5^{n-3} + 3\varepsilon 5^{\frac{n-3}{2}}-2,\;
		8\cdot 5^{n-3} + 4\varepsilon 5^{\frac{n-3}{2}}
		\right),
		$$
		respectively.
		\end{thm*}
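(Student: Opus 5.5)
Let $V=\FF_5^n$ with $n$ odd, and fix a non-degenerate quadratic form $Q$ of type $\varepsilon$, with associated bilinear form $B$ normalized so that $Q(x)=B(x,x)$. The vertex set $X$ will be the set of projective points $\langle x\rangle$ with $Q(x)=1$. Each such point has exactly two representatives $\pm x$, so $X$ can also be read as the set of norm-$1$ vectors modulo $\pm1$. The point count $|\{x:Q(x)=1\}|=5^{n-1}+\varepsilon 5^{(n-1)/2}$ (for the appropriate choice of discriminant realizing $\varepsilon$) then gives $|X|=(5^{n-1}+\varepsilon5^{(n-1)/2})/2$.

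For two points $\langle x\rangle,\langle y\rangle$, the quantity $B(x,y)^2\in\{0,1,4\}$ is well defined, and the $2$-dimensional subspace they span has a discriminant. I will define the relations by $B(x,y)\in\{0\}$, $\{\pm1\}$, $\{\pm2\}$. Note that $\pm1$ means the span is degenerate: $\det\begin{pmatrix}1&t\\t&1\end{pmatrix}=1-t^2=0$. The three relations $R_0,R_1,R_2$ form a symmetric partition of $X\times X$ minus the diagonal. The orthogonal group $O(Q)$ acts on $X$, and by Witt's theorem its orbits on pairs are exactly determined by $B(x,y)$ up to sign, together with the isometry type of the span when $B(x,y)=\pm1$. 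So $\{R_0,R_1,R_2\}$ is a rank-$4$ symmetric scheme, or possibly a fusion of the orbital scheme. In either case it is a coherent configuration on which everything can be computed from structure constants.

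The Deza graph $\Gamma$ will be one of these relations, namely $R_2$ or $R_0$, whichever has valency $5^{n-2}-\varepsilon5^{(n-3)/2}$. The two children will be the other two relations, $\Delta_1$ and $\Delta_2$, of valencies $(5^{n-2}-\varepsilon5^{(n-3)/2})/2$ and $2\cdot5^{n-2}+3\varepsilon5^{(n-3)/2}-1$. The second child must absorb the remaining pairs. As a sanity check, the three valencies sum to $|X|-1$.

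The core computation is the intersection numbers $p_{ij}^k$. Fix $x$ and $y$ in relation $k$, and count the $z$ with $Q(z)=1$, $B(x,z)=\alpha$, $B(y,z)=\beta$. Writing $z=\lambda x+\mu y+w$ with $w\in\langle x,y\rangle^\perp$ when the span $W$ is non-degenerate, this reduces to counting $w$ with $Q(w)=c$ in an $(n-2)$-dimensional space of known type. Those counts are $5^{n-3}\pm 5^{(n-3)/2}$-type expressions, depending on whether $c=0$ and on the square class of $c\cdot\mathrm{disc}$. One then sums over sign choices $\alpha,\beta\in\pm$ and divides by $2$.

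When $B(x,y)=\pm1$, the span is degenerate: $x-y$ (or $x+y$) is isotropic and orthogonal to both. This case needs a separate count via the radical, which I expect to be the main bookkeeping obstacle. The case analysis works only because $\FF_5$ is so small: the nonzero squares are $\{1,4\}$ and the non-squares are $\{2,3\}$.

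From these counts the plan is to verify three things.
\begin{enumerate}
\item $p_{\Gamma\Gamma}^k$ takes the value $b=2(5^{n-3}-\varepsilon5^{(n-3)/2})$ on one child's relation and $a=2\cdot5^{n-3}-\varepsilon5^{(n-3)/2}$ on the other, with edge-regularity amounting to $p_{\Gamma\Gamma}^\Gamma$ being constant.
\item The pairs $p_{\Delta_1\Delta_1}^{\Delta_1}$ and $p_{\Delta_1\Delta_1}^{k}$ are constant for $k\ne\Delta_1$, which gives the stated $(\lambda,\mu)$.
\item The complement of $\Delta_1\cup\{\text{diag}\}$ must be checked separately, since it contains $\Gamma$. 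The parameters of $\Delta_2$ then follow from the relation $B_2=J-I-B_1$ restricted to the span, using $A^2=kI+aB_1+bB_2$. Because $A,B_1,B_2$ lie in the Bose–Mesner algebra, strong regularity of $\Delta_1$ together with the algebra relations forces that of $\Delta_2$, via the identity for $B_2^2$ computed from the algebra.
\end{enumerate}

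A cleaner route that I would try first, if it works, is to compute the eigenmatrix of this rank-$4$ scheme using character sums over $\FF_5$. Once that is in hand, the statement that $\Delta_1$ is strongly regular reduces to its column of eigenvalues taking only two distinct nontrivial values, and the Deza property of $\Gamma$ reduces to $A^2\in\langle I,B_1,B_2\rangle$, read off from the structure constants.
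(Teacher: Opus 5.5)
Your plan is essentially the paper's proof. It uses the same vertex set $S^+$ and relations $R_0,R_1,R_2$, and the same reduction of $p^k_{ij}$ to counting $Q(\mathbf{w})=c$ on $\langle \mathbf{x},\mathbf{y}\rangle^\perp$, with a separate hyperbolic-pair count when $B(\mathbf{x},\mathbf{y})=\pm1$. It also makes the same choice $\Gamma=R_2$ with children $R_0$ and $R_1\cup R_2$. You have not yet carried out the count. Doing so gives, with $h=5^{(n-3)/2}$, the values $p^0_{22}=2(h^2-\varepsilon h)$, $p^1_{22}=p^2_{22}=2h^2-\varepsilon h$, $p^0_{00}=\tfrac12(h^2+\varepsilon h)$ and $p^1_{00}=p^2_{00}=\tfrac12(h^2-\varepsilon h)$, which are exactly what is needed.

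Your only deviation is getting the scheme property from Witt's theorem rather than from the counting, which is legitimate. After a sign choice, the Gram matrix of the span is determined by $B(\mathbf{x},\mathbf{y})$, so the $R_i$ are exactly the $O(Q)$-orbitals. Your ``possibly a fusion'' hedge is therefore unnecessary, which is just as well, since a fusion need not remain coherent. Two small slips remain. The valencies that sum to $|X|-1$ are $k_0,k_1,k_2$, not those of $\Gamma,\Delta_1,\Delta_2$. And $\Delta_2$ is strongly regular simply as the complement of $\Delta_1$, so no Bose--Mesner argument is needed.
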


		\medskip
		The paper is organized as follows.
		In Section~1, we recall the necessary definitions and background on Deza graphs, association schemes, and quadratic forms over finite fields.
		In Section~2, we determine the number of solutions to a system of equations involving a bilinear form and use this to construct a symmetric association scheme.
		In Section~3, we prove the strong regularity of the graph corresponding to a certain relation of the scheme.
		In Section~4, we establish the main result: we identify an infinite series of Deza graphs with strongly regular children and compute their parameters explicitly.

	\subsection{Deza Graphs}
	Let $\Gamma = (V,E)$ be a graph.
	For an edge $\{u, v\}$, the vertices $u$ and $v$ are called \emph{adjacent}, which is denoted by $u \sim v$.
	The \emph{distance} between vertices $u$ and $v$ is defined as the length of the shortest path connecting them and is denoted by $d(u,v)$.
	The \emph{diameter} of a graph $\Gamma$ is the maximum distance between any two of its vertices.
	A subgraph $\Delta$ of a graph $\Gamma$ is called \emph{induced} if for any two vertices
	$u, v$ in $\Delta$, we have $u \sim v$ in $\Delta$ if and only if $u \sim v$ in $\Gamma$.
	For any vertex $v$, we define the \emph{neighborhood} $\Gamma(v)$ and the \emph{second neighborhood} $\Gamma_2(v)$ as the induced subgraphs on the sets $\{u \mid u \sim v\}$ and $\{u \mid d(v,u) = 2\}$, respectively.
	A graph is called \emph{$k$-regular} if each of its vertices has exactly $k$ neighbors.
	A graph is called \emph{strongly regular} with parameters $(n,k,\lambda,\mu)$ if
	\begin{enumerate}
		\item $\Gamma = (V,E)$, $|V| = n$, and $\Gamma$ is $k$-regular.
		\item For any two adjacent vertices $u, v$: $|\Gamma(v) \cap \Gamma(u)| = \lambda$.
		\item For any two non-adjacent vertices $u, v$: $|\Gamma(v) \cap \Gamma(u)| = \mu$.
	\end{enumerate}
	
	Let $n$, $k$, $b$, $a$ be integers such that $0 \le a \le b \le k < n$.
	A graph $\Gamma$ is called a \emph{Deza graph} with parameters $(n,k,b,a)$ if it has $n$ vertices, is $k$-regular, and for any two distinct vertices $u, v$, the number $|\Gamma(v) \cap \Gamma(u)|$ belongs to $\{a, b\}$.
	The class of Deza graphs generalizes the class of strongly regular graphs.
	Let $\Gamma$ be a Deza graph and $A$ be its adjacency matrix.
	Then there exist symmetric $(0,1)$-matrices $B_1, B_2$ with zero diagonal such that
	$$
	A^2 = kI + aB_1 + bB_2,
	$$
	where $I$ is the identity matrix.
	The matrices $B_1, B_2$ are the adjacency matrices of graphs $\Delta_1, \Delta_2$, called the children of the graph $\Gamma$ (see \cite{Deza_1}).
	If the graphs $\Delta_1$ and $\Delta_2$ are strongly regular for a Deza graph $\Gamma$, $\Gamma$ is called a \emph{strongly} Deza graph.
	In \cite{Deza_1} and \cite{Kab}, methods for constructing strong Deza graphs from strongly regular graphs are described, relying on the existence of a specific type of automorphism in the strongly regular graph.
	\subsection{Association Schemes}
	Let $X$ be a nonempty finite set, and let $R_0, R_1, \ldots, R_d$ be nonempty subsets
	of the Cartesian product $X \times X$ satisfying the following conditions:
	\begin{enumerate}
		\item $R_0 = \{(x,x) \,|\, x \in X\}$.
		\item $R_0 \sqcup R_1 \sqcup \cdots \sqcup R_d = X \times X$.
		\item $R_i^T = R_i$ for $0 \leq i \leq d$, where $R_i^T = \{(y,x) \,|\, (x,y) \in R_i\}$.
		\item There exist integers $p^h_{i,j}$ $(0 \leq h,i,j \leq d)$ such that for $(x,y) \in R_h$,
		$$
		p^h_{i,j} = |
		\{ z \in X \,|\, (x,z) \in R_i,\; (z,y) \in R_j \} |.
		$$
	\end{enumerate}
	Then the pair
	$
	\mathcal A=(X,\{R_i\}_{i=0}^d)
	$
	is called a \emph{symmetric association scheme}. The integers $p_{i,j}^h$ are called the
	\emph{intersection numbers} of $\mathcal A$ (see \cite{BCN}).
	
	Symmetric association schemes can be used to construct graphs in a natural way, by taking
	unions of some of their relations. Namely, for a subset $F\subseteq\{1,2,\dots,d\}$, one
	considers the graph on $X$ whose adjacency matrix is $\sum_{f\in F} A_f$. The next theorem
	shows that the Deza property of this graph can be characterized entirely in terms of the
	intersection numbers of the underlying association scheme.
	
	\begin{thm*}{(\cite[Theorem 4.2]{Deza_1})}
		Let $(X,\{R_i\}_{i=0}^d)$ be a symmetric association scheme, and let $F\subseteq\{1,2,\dots,d\}$.
		Denote by $\Delta$ the graph with adjacency matrix
		$
		\sum_{f\in F}A_f.
		$
		Then $\Delta$ is a Deza graph if and only if for every $k\in\{1,\dots,d\}$ the sum
		$
		\sum_{f,g\in F}p^k_{f,g}
		$
		takes at most two distinct values.
	\end{thm*}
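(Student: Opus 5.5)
The statement is \cite[Theorem 4.2]{Deza_1}. The plan is to express $A^2$ in the Bose--Mesner algebra of the scheme and read off the number of common neighbours of a pair $(x,y)$ as a function of the relation containing it.

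Put $A=\sum_{f\in F}A_f$, where $A_f$ is the adjacency matrix of $R_f$. The scheme axioms give $A_fA_g=\sum_{h=0}^d p^h_{f,g}A_h$, and therefore
$$
A^2=\sum_{h=0}^d\Bigl(\sum_{f,g\in F}p^h_{f,g}\Bigr)A_h .
$$
For a fixed ordered pair $(x,y)\in R_h$, the $(x,y)$-entry of $A^2$ is $|\Delta(x)\cap\Delta(y)|$ when $x\ne y$ and is the degree of $x$ when $x=y$.

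Regularity comes from the diagonal ($h=0$). Symmetry of the scheme gives $p^0_{f,g}=0$ for $f\ne g$ and $p^0_{f,f}=k_f$, the valency of $R_f$. Hence every vertex has degree $\sum_{f\in F}k_f$, so $\Delta$ is always regular, independently of the condition in the theorem.

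For distinct $x,y$ we have $(x,y)\in R_h$ for a unique $h\in\{1,\dots,d\}$. By the formula above, the number of common neighbours of $x$ and $y$ equals $s_h=\sum_{f,g\in F}p^h_{f,g}$. Every relation $R_h$ is nonempty, so each value $s_h$ is actually attained by some pair. Consequently the set of common-neighbour counts over all pairs of distinct vertices is exactly $\{s_1,\dots,s_d\}$.

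It follows that $\Delta$ is a Deza graph precisely when this set has at most two elements. This proves both implications at once. There is one minor formality: if only one value occurs, the definition is met by taking $a=b$, and the requirement $a\le b\le k$ holds automatically because common-neighbour counts never exceed the degree.

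There is no real obstacle here. The only care needed is to check that the diagonal gives regularity for free and to use nonemptiness of the relations for the "only if" direction.
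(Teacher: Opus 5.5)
Your argument is correct and is the standard one: you expand $A^2=\sum_{h}\bigl(\sum_{f,g\in F}p^h_{f,g}\bigr)A_h$ in the Bose--Mesner algebra, read off regularity from $h=0$ and the common-neighbour counts from $h\ge 1$, and use the nonemptiness of each $R_h$ for the ``only if'' direction. The paper gives no proof of its own and simply cites Theorem 4.2 of Erickson et al., so there is no separate route in the paper to compare yours against.
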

	
	\subsection{Quadratic Forms}
	\begin{prop}[see \cite{Grove}]
		Let $\FF_q = \mathrm{GF}(q)$ be a finite field of odd characteristic, and let $V = \FF^n$.
		Then every nondegenerate quadratic form on $V$ is equivalent to a diagonal form $x_1^2 + x_2^2 + \cdots + d x_n^2$
		with $d \in \FF_q^*$.
	\end{prop}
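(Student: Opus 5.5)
The statement is the classical diagonalization theorem for quadratic forms over finite fields of odd characteristic. I will prove it by induction on $n$, combining orthogonal splitting with the fact that every element of $\FF_q$ is a sum of two squares.

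First, since $q$ is odd, a quadratic form $Q$ is determined by its symmetric bilinear form $B(x,y)=\tfrac12\bigl(Q(x+y)-Q(x)-Q(y)\bigr)$, with $Q(x)=B(x,x)$. Nondegeneracy means $B$ has trivial radical.

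Next, I split off an orthogonal basis. If $Q\not\equiv 0$, pick $v$ with $Q(v)\neq 0$. Then $V=\langle v\rangle\oplus v^{\perp}$, and the restriction of $Q$ to $v^{\perp}$ is again nondegenerate. A nonzero nondegenerate form cannot vanish identically, since otherwise polarization would give $B=0$. Iterating therefore yields a basis $e_1,\dots,e_n$ in which
\[
Q=a_1x_1^2+\cdots+a_nx_n^2,\qquad a_i\in\FF_q^*.
\]

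The key step is to show that every binary form $ax^2+by^2$ with $a,b\neq 0$ represents $1$. The sets $\{ax^2\}$ and $\{1-by^2\}$ each have $(q+1)/2$ elements, so they intersect by pigeonhole. This gives $u$ with $Q(u)=1$ in $\langle e_1,e_2\rangle$ whenever $n\ge 2$. Taking $v=u$ in the splitting step, I get $Q\simeq x_1^2\perp Q'$, where $Q'$ is a nondegenerate form on $u^\perp$ of dimension $n-1$.

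Finally, induction on $n$ gives $Q'\simeq x_2^2+\cdots+x_{n-1}^2+dx_n^2$ for some $d\in\FF_q^*$. The base case $n=1$ is trivial with $d=a_1$. This proves the proposition. As a side remark, $d$ is determined up to squares by the discriminant.

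The only nonroutine point is the representation of $1$ by a binary form, which the counting argument handles. Everything else is standard bookkeeping.
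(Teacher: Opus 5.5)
Your proof is correct and complete, and it is essentially the standard argument for this result: split off anisotropic vectors orthogonally, then use the pigeonhole count $|\{ax^2\}| + |\{1-by^2\}| = q+1 > q$ to show that a nondegenerate binary form represents $1$. The paper gives no proof of its own and simply cites Grove. Your side remark that $d$ is determined only modulo squares is also the right way to read the paper's later notation $\det Q = d$. That ambiguity is harmless there, since the paper only ever uses $\det Q$ through $\mu$.
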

	We set $\det Q = d$.
	
	Let $Q$ be a quadratic form.
	Define
	$$
	B(\mathbf{x}, \mathbf{y}) = \frac{1}{2} \left(Q(\mathbf{x}+\mathbf{y}) - Q(\mathbf{x}) - Q(\mathbf{y})\right).
	$$
	
	\begin{lem}[see \cite{Lidl}]\label{number_of_sols_0}
		Let $Q$ be a nondegenerate quadratic form over the field $\FF_q=\mathrm{GF}(q)$ in $n$ variables.
		Then
		\begin{enumerate}
			\item If $n$ is odd, then
			$$
			|\{\mathbf{v} \in \FF_q^n \,|\, Q(\mathbf{v}) = 0\}| = q^{n-1}.
			$$
			\item If $n$ is even, then
			$$
			|\{\mathbf{v} \in \FF_q^n \,|\, Q(\mathbf{v}) = 0\}|
			= q^{n-1} + \frac{q-1}{q} \,\mu (\det Q) \cdot (\mu(-1)q)^{\frac{n}{2}},
			$$
			where $\mu$ denotes the quadratic character of the field $\FF_q$.
		\end{enumerate}
	\end{lem}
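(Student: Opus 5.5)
The plan is to diagonalize $Q$ and count the solutions of $Q(\mathbf{v})=0$ with Gauss sums, i.e.\ with additive characters. By the Proposition, after a change of variables we may assume $Q=\sum_{i=1}^n a_i x_i^2$ with $a_i\in\FF_q^*$ and $\prod a_i=\det Q$ up to squares. Let $\psi$ be a nontrivial additive character of $\FF_q$. Orthogonality gives
$$
N:=|\{\mathbf v: Q(\mathbf v)=0\}|=\frac1q\sum_{t\in\FF_q}\sum_{\mathbf v}\psi(tQ(\mathbf v))
=q^{n-1}+\frac1q\sum_{t\ne0}\prod_{i=1}^n G(ta_i),
$$
where $G(c)=\sum_{x}\psi(cx^2)$. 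We will use the standard facts $G(c)=\mu(c)G(1)$ for $c\neq 0$ and $G(1)^2=\mu(-1)q$. Hence
$$
\prod_i G(ta_i)=\mu(t)^n\mu(\det Q)\,G(1)^n .
$$

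\textbf{Odd $n$.} Here $\mu(t)^n=\mu(t)$, and $\sum_{t\ne0}\mu(t)=0$. The correction term therefore vanishes and $N=q^{n-1}$. This case needs nothing beyond the Gauss sum identity; one can also see it by induction, splitting off one hyperbolic plane.

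\textbf{Even $n$.} Here $\mu(t)^n=1$ and $G(1)^n=(\mu(-1)q)^{n/2}$, so the correction term is
$$
\frac{q-1}{q}\,\mu(\det Q)\,(\mu(-1)q)^{n/2},
$$
which is exactly the claimed formula. Note that $\mu(\det Q)$ is well defined because equivalent forms have determinants differing by a square.

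The only genuinely nontrivial input is the evaluation $G(1)^2=\mu(-1)q$. To prove it, write $G(1)=\sum_y (1+\mu(y))\psi(y)=\sum_y\mu(y)\psi(y)$ and compute $|G|^2=q$ by expanding and using orthogonality. Then use $\overline{G(1)}=G(-1)=\mu(-1)G(1)$ to get $G(1)^2=\mu(-1)|G(1)|^2=\mu(-1)q$. The identity $G(c)=\mu(c)G(1)$ follows from the same rewriting, $G(c)=\sum_y\mu(y)\psi(cy)$, followed by the substitution $y\mapsto c^{-1}y$. I expect this bookkeeping, together with checking that the diagonal normalization $x_1^2+\dots+dx_n^2$ gives $\prod a_i=d$, to be the only step requiring care. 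Alternatively, the whole statement can simply be cited from \cite{Lidl}, Theorems 6.26–6.27.
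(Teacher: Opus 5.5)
Your proof is correct, and it supplies an argument where the paper has none: the lemma is only quoted from \cite{Lidl}. Each step checks out. For $c\neq 0$, the rewriting $G(c)=\sum_y\mu(y)\psi(cy)=\mu(c)G(1)$ holds; this is where odd $q$ enters, through the square-root count $1+\mu(y)$ with $\mu(0)=0$. The evaluation $G(1)^2=\mu(-1)q$ via $\overline{G(1)}=G(-1)$ is right, and so is the parity split using $\sum_{t\neq0}\mu(t)=0$. With the paper's normal form $x_1^2+\dots+x_{n-1}^2+dx_n^2$ one has $\prod a_i=d=\det Q$ exactly, so no square-class bookkeeping is needed.

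The proof behind the citation (Lidl--Niederreiter, Theorems 6.26--6.27) runs differently. It counts the binary equations $a_1x_1^2+a_2x_2^2=c$ by a sum of the quadratic character, assembles the even case by a convolution identity over $c_1+\dots+c_m=b$, and then gets the odd case by adjoining one more variable. That route uses only the quadratic character and elementary counting. Your additive-character computation instead treats both parities in a single calculation and generalises at no extra cost. Put the factor $\psi(-ta)$ into the $t$-sum, and use $\sum_{t\neq0}\mu(t)\psi(-ta)=\mu(-a)G(1)$ for odd $n$ and $\sum_{t\neq0}\psi(-ta)=-1$ for even $n$. This yields Lemma~\ref{number_of_sols_a} as well, which the paper likewise only cites.
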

	
	\begin{lem}[see \cite{Lidl}]\label{number_of_sols_a}
		Let $Q$ be a nondegenerate quadratic form over the field $\FF_q=\mathrm{GF}(q)$ in $n$ variables, and let $a \in \FF_q^*$.
		Then
		\begin{enumerate}
			\item If $n$ is odd, then
			$$
			|\{\mathbf{v} \in \FF_q^n \,|\, Q(\mathbf{v}) = a\}| = q^{n-1} + \mu(a)\, \mu((-1)^{\frac{n-1}{2}}\det Q)\, q^{\frac{n-1}{2}}.
			$$
			\item If $n$ is even, then
			$$
			|\{\mathbf{v} \in \FF_q^n \,|\, Q(\mathbf{v}) = a\}|
			= q^{n-1} - \mu((-1)^{\frac{n}{2}} \det Q)\, q^{\frac{n}{2}-1},
			$$
			where $\mu$ denotes the quadratic character of the field $\FF_q$.
		\end{enumerate}
	\end{lem}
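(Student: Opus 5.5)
The plan is to reduce to a diagonal form and evaluate character sums. By the proposition in this subsection, I may assume $Q=x_1^2+\dots+x_{n-1}^2+dx_n^2$, and it suffices to count solutions of $Q(\mathbf v)=a$ for $a\in\FF_q^*$. Write $N(c)=|\{\mathbf v: Q(\mathbf v)=c\}|$. I will use additive characters. Fix a nontrivial additive character $\psi$ of $\FF_q$. Then
\[
N(a)=\frac1q\sum_{t\in\FF_q}\sum_{\mathbf v}\psi\bigl(t(Q(\mathbf v)-a)\bigr)=q^{n-1}+\frac1q\sum_{t\ne0}\psi(-ta)\prod_{i=1}^n G(tc_i),
\]
where $c_i$ are the diagonal coefficients and $G(s)=\sum_{x}\psi(sx^2)$. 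This expression factors because the form is diagonal.

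Next, the standard quadratic Gauss sum identity gives $G(s)=\mu(s)G(1)$ for $s\neq0$, together with $G(1)^2=\mu(-1)q$. The product therefore equals $\mu(t)^n\mu(\det Q)G(1)^n$.

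\textbf{Case $n$ odd.} Here $\mu(t)^n=\mu(t)$, and $G(1)^n=G(1)\,(\mu(-1)q)^{(n-1)/2}$. The remaining sum is $\sum_{t\ne0}\mu(t)\psi(-ta)=\mu(-a)G(1)$. Collecting terms:
\[
\frac1q\,\mu(-a)\mu(\det Q)\,G(1)^2(\mu(-1)q)^{\frac{n-1}{2}}=\mu(a)\mu(\det Q)\,\mu(-1)^{\frac{n-1}{2}}q^{\frac{n-1}{2}},
\]
using $\mu(-1)G(1)^2=q$. This is the odd-$n$ formula with $\mu((-1)^{(n-1)/2}\det Q)$.

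\textbf{Case $n$ even.} Here $\mu(t)^n=1$, so the inner sum becomes $\sum_{t\neq0}\psi(-ta)=-1$. This gives $N(a)=q^{n-1}-\frac1q\mu(\det Q)(\mu(-1)q)^{n/2}$, which simplifies to $q^{n-1}-\mu((-1)^{n/2}\det Q)q^{n/2-1}$.

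The main obstacle is the Gauss sum facts $G(s)=\mu(s)G(1)$ and $G(1)^2=\mu(-1)q$. The first follows by substituting $x\mapsto$ multiples and writing $G(s)=\sum_y(1+\mu(y))\psi(sy)=\sum_y\mu(y)\psi(sy)$. The second follows by computing $|G(1)|^2=q$ and $\overline{G(1)}=G(-1)=\mu(-1)G(1)$.

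As a consistency check (or as an alternative route), I can compare against Lemma \ref{number_of_sols_0}. Summing over all $c$ gives $q^n=N(0)+\sum_{a\ne0}N(a)$. In the odd case the $\mu(a)$ terms cancel, which is consistent with $N(0)=q^{n-1}$.
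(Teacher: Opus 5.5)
Your proof is correct, and it takes a different route from the source the paper relies on. The paper gives no proof of this lemma; it quotes it from Lidl--Niederreiter.

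The proof there (Theorems 6.26--6.27) is purely elementary. After diagonalising, it writes $N(a_1x_1^2+\dots+a_nx_n^2=b)$ as a convolution of the one-variable counts $N(a_ix^2=c)=1+\mu(a_ic)$. It first evaluates the binary case $N(a_1x_1^2+a_2x_2^2=b)=q+v(b)\,\mu(-a_1a_2)$, where $v(b)=-1$ for $b\neq 0$ and $v(0)=q-1$, using $\sum_{c}\mu\bigl(c(b-c)\bigr)=-\mu(-1)$ for $b\neq 0$. Even $n$ then follows by pairing variables, using $\sum_{c_1+c_2=b}v(c_1)v(c_2)=q\,v(b)$, and odd $n$ by adjoining one more square term.

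Your additive-character computation buys uniformity: one formula covers both parities. Specialising it to $a=0$, where the inner sum becomes $\sum_{t\neq 0}\mu(t)^n$ (that is, $0$ for odd $n$ and $q-1$ for even $n$), gives Lemma~\ref{number_of_sols_0} directly. By contrast, the summation identity you mention cannot serve as an alternative proof, since it only determines $\sum_{a\neq 0}N(a)$, not the individual values.

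The price of your route is the analytic input $G(1)^2=\mu(-1)q$, which rests on $|G(1)|^2=q$ over $\mathbb{C}$. The elementary route needs only integer counting and one quadratic character sum, at the cost of treating even and odd $n$ separately.
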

	
	\section{Number of Solutions to a System of Equations Involving a Bilinear Form}
	\begin{lem}\label{invariant_lemma}(see Proposition 1.12 in \cite{Lam})
		Let $\mathbf{v}_1, \mathbf{v}_2, \dots, \mathbf{v}_n$ and $\mathbf{w}_1, \mathbf{w}_2, \dots, \mathbf{w}_n$ be anisotropic orthogonal bases of $V$.
		Then 
		$$
		\mu(Q(\mathbf{v}_1) Q(\mathbf{v}_2) \cdots Q(\mathbf{v}_n)) = \mu(Q(\mathbf{w}_1) Q(\mathbf{w}_2) \cdots Q(\mathbf{w}_n)).
		$$		
	\end{lem}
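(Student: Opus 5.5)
The plan is to identify the product $Q(\mathbf{v}_1)\cdots Q(\mathbf{v}_n)$ with the determinant of the Gram matrix of the symmetric bilinear form $B$ in the given basis. Then the statement reduces to the fact that Gram determinants in different bases differ by a nonzero square factor. No counting results (Lemmas~\ref{number_of_sols_0} and~\ref{number_of_sols_a}) are needed; this is pure linear algebra.

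\emph{Gram matrix.} From the definition $B(\mathbf{x},\mathbf{y})=\frac12\bigl(Q(\mathbf{x}+\mathbf{y})-Q(\mathbf{x})-Q(\mathbf{y})\bigr)$ and the homogeneity $Q(2\mathbf{x})=4Q(\mathbf{x})$, we get $B(\mathbf{x},\mathbf{x})=Q(\mathbf{x})$. Here $2$ is invertible because the characteristic is odd. Let $G_{\mathbf v}=\bigl(B(\mathbf{v}_i,\mathbf{v}_j)\bigr)_{i,j}$. Since the basis is orthogonal, the off-diagonal entries vanish and the diagonal entries are $Q(\mathbf{v}_i)$. Hence $\det G_{\mathbf v}=Q(\mathbf{v}_1)\cdots Q(\mathbf{v}_n)$. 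This product is nonzero because each $\mathbf{v}_i$ is anisotropic. The same holds for $G_{\mathbf w}$.

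\emph{Change of basis.} Write $\mathbf{w}_j=\sum_i p_{ij}\mathbf{v}_i$ with $P=(p_{ij})$ invertible, since both families are bases of $V$. Bilinearity of $B$ gives $G_{\mathbf w}=P^{T}G_{\mathbf v}P$, and therefore
$$
\det G_{\mathbf w}=(\det P)^2\det G_{\mathbf v}.
$$
Because $\det P\in\FF_q^*$, the factor $(\det P)^2$ is a nonzero square, so $\mu\bigl((\det P)^2\bigr)=1$. Multiplicativity of the quadratic character $\mu$ on $\FF_q^*$ then gives $\mu(\det G_{\mathbf w})=\mu(\det G_{\mathbf v})$, which is the claimed equality.

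\emph{Main obstacle.} There is essentially none. The only points to check carefully are that $B(\mathbf{x},\mathbf{x})=Q(\mathbf{x})$ with this normalization of $B$, and that anisotropy guarantees both products are nonzero, so that $\mu$ is evaluated on $\FF_q^*$. The latter excludes the value $\mu(0)=0$, although even that case would be consistent, since then both sides would vanish.
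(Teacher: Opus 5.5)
Your proof is correct and complete. You identify $Q(\mathbf{v}_1)\cdots Q(\mathbf{v}_n)$ with the Gram determinant of $B$ and use $\det(P^{T}G_{\mathbf v}P)=(\det P)^2\det G_{\mathbf v}$; this is the standard argument that the determinant of a quadratic space is well defined modulo squares, which is the fact the paper relies on by citing Lam (it gives no proof of its own).
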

	Denote $\mu(V) = \mu(Q(\mathbf{v}_1) Q(\mathbf{v}_2) \cdots Q(\mathbf{v}_n))$.
	Then, 
	if $V = W \oplus U$, it follows that $\mu(V) = \mu(W)\mu(U)$.
	
	\begin{lem}\label{invariant_simple_lemma}
		Let $(\mathbf{v}, \mathbf{u})$ be linearly independent anisotropic vectors in $V$.
		Then
		$$\mu(\langle \mathbf{v}, \mathbf{u}  \rangle) = \mu\left(  Q(\mathbf{v})Q(\mathbf{u}) - B(\mathbf{v}, \mathbf{u})^2 \right).
		$$
	\end{lem}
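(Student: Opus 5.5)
The plan is to compute $\mu(\langle \mathbf{v},\mathbf{u}\rangle)$ from an explicit orthogonal basis of the plane and then invoke Lemma~\ref{invariant_lemma}. Since $\mathbf{v}$ is anisotropic, $Q(\mathbf{v})\neq 0$, and we can apply one Gram--Schmidt step inside $W=\langle \mathbf{v},\mathbf{u}\rangle$:
$$
\mathbf{w}=\mathbf{u}-\frac{B(\mathbf{v},\mathbf{u})}{Q(\mathbf{v})}\,\mathbf{v}.
$$
Using $B(\mathbf{x},\mathbf{x})=Q(\mathbf{x})$, which follows from the definition of $B$ because $Q(2\mathbf{x})=4Q(\mathbf{x})$, we get $B(\mathbf{v},\mathbf{w})=B(\mathbf{v},\mathbf{u})-B(\mathbf{v},\mathbf{u})=0$. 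Since $\mathbf{v},\mathbf{u}$ are linearly independent, $\mathbf{v},\mathbf{w}$ form a basis of $W$, and this basis is orthogonal.

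Next, compute $Q(\mathbf{w})$ by bilinearity:
$$
Q(\mathbf{w})=Q(\mathbf{u})-2\frac{B(\mathbf{v},\mathbf{u})^2}{Q(\mathbf{v})}+\frac{B(\mathbf{v},\mathbf{u})^2}{Q(\mathbf{v})^2}Q(\mathbf{v})
=\frac{Q(\mathbf{v})Q(\mathbf{u})-B(\mathbf{v},\mathbf{u})^2}{Q(\mathbf{v})}.
$$
Hence $Q(\mathbf{v})Q(\mathbf{w})=Q(\mathbf{v})Q(\mathbf{u})-B(\mathbf{v},\mathbf{u})^2$.

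The step that needs care is the hypothesis of Lemma~\ref{invariant_lemma}, which requires an anisotropic orthogonal basis; so we need $Q(\mathbf{w})\neq 0$. This holds exactly when the Gram determinant $Q(\mathbf{v})Q(\mathbf{u})-B(\mathbf{v},\mathbf{u})^2$ is nonzero, i.e. when $W$ is nondegenerate. So the invariant $\mu(W)$ makes sense precisely for nondegenerate planes, and I would state this as implicitly assumed. In the degenerate case the determinant is $0$ and both sides equal $0$, taking $\mu(0)=0$ and reading $\mu(W)=0$ for a degenerate $W$.

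Assuming nondegeneracy, $\mathbf{v},\mathbf{w}$ is an anisotropic orthogonal basis of $W$. By Lemma~\ref{invariant_lemma} applied to $W$ with the restricted form,
$$
\mu(W)=\mu\bigl(Q(\mathbf{v})Q(\mathbf{w})\bigr)=\mu\left(Q(\mathbf{v})Q(\mathbf{u})-B(\mathbf{v},\mathbf{u})^2\right),
$$
which is the claim. Alternatively, one can note that $Q(\mathbf{v})Q(\mathbf{w})$ is the Gram determinant in the basis $\mathbf{v},\mathbf{w}$. A change of basis multiplies this determinant by a nonzero square, so it equals the Gram determinant in the basis $\mathbf{v},\mathbf{u}$ up to a square factor. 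This gives the same conclusion.
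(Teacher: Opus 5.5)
Your proposal is correct and is essentially the paper's proof. Both use one Gram--Schmidt step $\mathbf{w}=\mathbf{u}-\frac{B(\mathbf{v},\mathbf{u})}{Q(\mathbf{v})}\mathbf{v}$, the identity $Q(\mathbf{v})Q(\mathbf{w})=Q(\mathbf{v})Q(\mathbf{u})-B(\mathbf{v},\mathbf{u})^2$, and Lemma~\ref{invariant_lemma}; your added remark that $Q(\mathbf{w})\neq 0$ (nondegeneracy of the plane) is needed is a point the paper leaves implicit, and it is harmless there because the lemma is only applied when $\delta=1-B(\mathbf{v},\mathbf{u})^2\neq 0$.
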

	\begin{proof}
		Apply the Gram–Schmidt process to the vectors $(\mathbf{v}, \mathbf{u})$. Then 
		$$
		\langle \mathbf{v}, \mathbf{u}  \rangle = \left\langle \mathbf{v}, \mathbf{u} - \frac{ B(\mathbf{v}, \mathbf{u}) }{Q(\mathbf{v})}\mathbf{v}  \right\rangle,
		$$
		$$
		Q\left( \mathbf{u} - \frac{ B(\mathbf{v}, \mathbf{u}) }{Q(\mathbf{v})}\mathbf{v}\right) = Q(\mathbf{u}) - 
		\frac{B(\mathbf{v}, \mathbf{u})^2}{Q(\mathbf{v})},
		$$
		and by Lemma \ref{invariant_lemma} we obtain
		$$
		\mu(\langle \mathbf{v}, \mathbf{u}  \rangle) = \mu\left(  Q(\mathbf{v}) \left( Q(\mathbf{u}) - 
		\frac{B(\mathbf{v}, \mathbf{u})^2}{Q(\mathbf{v})} \right) \right)   =\mu\left(  Q(\mathbf{v})Q(\mathbf{u}) - B(\mathbf{v}, \mathbf{u})^2 \right).
		$$
	\end{proof}
	
	A pair $(\mathbf{x},\mathbf{y})$ in $V$ is called a \emph{hyperbolic pair} if
	$Q(\mathbf{x})=Q(\mathbf{y})=0$ and $B(\mathbf{x},\mathbf{y})=1$.
	
	\begin{lem}\label{invariant_hyperpane_lemma}
		Let $(\mathbf{x}, \mathbf{y})$ be a hyperbolic pair in $V$. Then $\mu(\langle \mathbf{x}, \mathbf{y} \rangle) = \mu(-1)$.
	\end{lem}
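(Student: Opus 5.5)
The plan is to exhibit an explicit anisotropic orthogonal basis of $\langle \mathbf{x}, \mathbf{y}\rangle$ and then invoke Lemma~\ref{invariant_lemma}. Equivalently, since the plane is spanned by two linearly independent vectors, Lemma~\ref{invariant_simple_lemma} can be applied once suitable anisotropic vectors have been chosen.

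First, $\mathbf{x}$ and $\mathbf{y}$ are linearly independent. If $\mathbf{y} = c\mathbf{x}$, then $B(\mathbf{x},\mathbf{y}) = cB(\mathbf{x},\mathbf{x}) = cQ(\mathbf{x}) = 0 \neq 1$. Here we use $B(\mathbf{v},\mathbf{v}) = Q(\mathbf{v})$, which follows from the definition of $B$ since $Q(2\mathbf{v}) = 4Q(\mathbf{v})$. Since $\mathbf{x}$ and $\mathbf{y}$ are themselves isotropic, we pass to
$$
\mathbf{v} = \mathbf{x} + \mathbf{y}, \qquad \mathbf{u} = \mathbf{x} - \mathbf{y}.
$$
These also span $\langle \mathbf{x}, \mathbf{y}\rangle$ because the characteristic is odd. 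By bilinearity,
$$
Q(\mathbf{v}) = Q(\mathbf{x}) + 2B(\mathbf{x},\mathbf{y}) + Q(\mathbf{y}) = 2, \qquad Q(\mathbf{u}) = -2,
$$
and
$$
B(\mathbf{v},\mathbf{u}) = Q(\mathbf{x}) - Q(\mathbf{y}) = 0.
$$
Thus $\mathbf{v},\mathbf{u}$ is an anisotropic orthogonal basis of the plane. By Lemma~\ref{invariant_lemma}, or directly by Lemma~\ref{invariant_simple_lemma},
$$
\mu(\langle \mathbf{x},\mathbf{y}\rangle) = \mu\bigl(2\cdot(-2) - 0\bigr) = \mu(-4) = \mu(-1)\mu(2)^2 = \mu(-1).
$$

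The only point requiring care is that the restriction of $Q$ to the plane is nondegenerate, so that the invariant $\mu(\langle \mathbf{x},\mathbf{y}\rangle)$ is defined. This holds because the Gram matrix of the plane in the basis $\mathbf{x},\mathbf{y}$ is
$$
\begin{pmatrix} 0 & 1 \\ 1 & 0 \end{pmatrix},
$$
whose determinant is $-1 \neq 0$. This also provides a cross-check: the discriminant is $-1$, consistent with the answer $\mu(-1)$. I expect no real obstacle beyond this bookkeeping.
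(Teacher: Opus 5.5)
Your proposal is correct and follows essentially the same route as the paper: both pass to the orthogonal basis $\mathbf{x}+\mathbf{y},\ \mathbf{x}-\mathbf{y}$ with $Q$-values $2$ and $-2$, then apply Lemma~\ref{invariant_lemma} to get $\mu(-4)=\mu(-1)$. Your extra checks (linear independence of $\mathbf{x},\mathbf{y}$, spanning in odd characteristic, nondegeneracy of the plane) are details the paper leaves implicit.
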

	\begin{proof}
		Consider $\mathbf{a} = \mathbf{x} + \mathbf{y}$ and $\mathbf{b} = \mathbf{x} - \mathbf{y}$.
		Note that $B(\mathbf{a}, \mathbf{b}) = B(\mathbf{x}, \mathbf{x}) - B(\mathbf{y}, \mathbf{y}) = 0$.
		Furthermore, 
		\begin{align*}
			Q(\mathbf{a}) &= Q(\mathbf{x}) + 2B(\mathbf{x},\mathbf{y}) + Q(\mathbf{y}) = 2, \\
			Q(\mathbf{b}) &= Q(\mathbf{x}) - 2B(\mathbf{x},\mathbf{y}) + Q(\mathbf{y}) = -2.
		\end{align*}
		Hence $(\mathbf{a}, \mathbf{b})$ is an orthogonal basis of $\langle \mathbf{x}, \mathbf{y} \rangle$, and by Lemma \ref{invariant_lemma} we have
		$$\mu(\langle \mathbf{x}, \mathbf{y} \rangle) = \mu(Q(\mathbf{a})Q(\mathbf{b})) = \mu(-4) = \mu(-1).$$
	\end{proof}

	\begin{thm}\label{number_of_sols_K_thm}
		Let $S = \{\mathbf{v} \in \FF_q^n \,|\, Q(\mathbf{v}) = 1\}$.
		Let $a \in \FF_q$, $\mathbf{x} \in V$, and $\mathbf{u} \in S$.
		Denote $\mathcal{N}({n, \det Q, q, a}) = |\{\mathbf{v} \in \FF_q^n \,|\, Q(\mathbf{v}) = a\}|$.
		Consider the system of equations
		\begin{equation}\label{Ka-system}
			\begin{cases}
				B(\mathbf{x}, \mathbf{x}) = 1, \\
				B(\mathbf{v}, \mathbf{x}) = a.
				\\
			\end{cases}
		\end{equation}
		Then the number of solutions to the system (\ref{Ka-system}) depends only on $a$ and is equal to:
		$$
		K_{a}(\mathbf{v}, \mathbf{u}) = \mathcal{N}(n-1, \det Q, q, 1-a^2).
		$$
	\end{thm}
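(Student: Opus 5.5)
The statement has typos; I read it as follows. Fix $\mathbf{u}\in S$, so $Q(\mathbf{u})=1$. We count the vectors $\mathbf{v}$ satisfying
$$Q(\mathbf{v})=B(\mathbf{v},\mathbf{v})=1,\qquad B(\mathbf{v},\mathbf{u})=a.$$
The claim is that this number equals $\mathcal{N}(n-1,\det Q,q,1-a^2)$. The plan is to decompose $V$ orthogonally along $\mathbf{u}$ and reduce the system to a single quadratic equation on the orthogonal complement $W=\mathbf{u}^{\perp}$.

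\textbf{Splitting off $\mathbf{u}$.} Since $Q(\mathbf{u})=1\neq 0$, the vector $\mathbf{u}$ is anisotropic. Hence $V=\langle\mathbf{u}\rangle\oplus W$ with $W=\{\mathbf{w}\mid B(\mathbf{w},\mathbf{u})=0\}$, and the restriction of $Q$ to $W$ is nondegenerate of dimension $n-1$.

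Write $\mathbf{v}=t\mathbf{u}+\mathbf{w}$ with $t\in\FF_q$ and $\mathbf{w}\in W$. Then
$$B(\mathbf{v},\mathbf{u})=tQ(\mathbf{u})=t,$$
so the second equation forces $t=a$. Orthogonality gives
$$Q(\mathbf{v})=a^2Q(\mathbf{u})+Q(\mathbf{w})=a^2+Q(\mathbf{w}),$$
so the first equation becomes $Q|_W(\mathbf{w})=1-a^2$. The map $\mathbf{v}\mapsto\mathbf{w}$ is a bijection between solutions of the system and vectors of $W$ with $Q|_W(\mathbf{w})=1-a^2$. Therefore the count is $\mathcal{N}(n-1,\det(Q|_W),q,1-a^2)$.

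\textbf{Identifying the determinant of $Q|_W$.} Choose an orthogonal anisotropic basis $\mathbf{w}_2,\dots,\mathbf{w}_n$ of $W$. Then $\mathbf{u},\mathbf{w}_2,\dots,\mathbf{w}_n$ is an orthogonal anisotropic basis of $V$. By Lemma~\ref{invariant_lemma} and the multiplicativity $\mu(V)=\mu(\langle\mathbf{u}\rangle)\mu(W)$,
$$\mu(W)=\mu(Q(\mathbf{u}))^{-1}\mu(V)=\mu(V)=\mu(\det Q),$$
since $Q(\mathbf{u})=1$. Up to isometry, a nondegenerate form over $\FF_q$ with $q$ odd is determined by its dimension and the square class of its determinant; this is the diagonalization in the cited Proposition, rescaling the last coefficient by a square. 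So $Q|_W$ is equivalent to $x_1^2+\dots+x_{n-2}^2+dx_{n-1}^2$ with $\mu(d)=\mu(\det Q)$. The formulas in Lemmas~\ref{number_of_sols_0} and~\ref{number_of_sols_a} depend only on $\mu(\det Q)$, so the count is $\mathcal{N}(n-1,\det Q,q,1-a^2)$, interpreted through the appropriate lemma according to whether $1-a^2$ is zero. In particular the count depends only on $a$, not on $\mathbf{u}$.

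\textbf{Main obstacle.} The only delicate point is justifying that $\mathcal{N}$ depends only on the square class of $\det$. This requires the fact that the determinant modulo squares is an isometry invariant, which Lemma~\ref{invariant_lemma} supplies, together with the classification of forms by dimension and discriminant. The remaining steps are routine linear algebra.
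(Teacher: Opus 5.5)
Your proposal is correct and follows the paper's proof. You split off the fixed norm-$1$ vector as $V=\langle\mathbf{u}\rangle\oplus\mathbf{u}^{\perp}$, reduce the system to $Q(\mathbf{w})=1-a^2$ on the complement, and use the orthogonal-basis invariance lemma to get $\mu(\mathbf{u}^{\perp})=\mu(\det Q)$. The only differences are cosmetic: you relabel the unknown and the fixed vector, and you spell out why the counts depend only on the square class of the determinant, which the paper leaves implicit.
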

	\begin{proof}
		Since $\mathbf{v}$ is anisotropic, we have a decomposition $V = \langle \mathbf{v} \rangle \oplus W$, where $W = \langle \mathbf{v} \rangle^\perp$.
		Let $\mathbf{x}$ be a solution of system \ref{Ka-system}, and let $\mathbf{x} = \alpha \mathbf{v} + \mathbf{w}$.
		Then system \ref{Ka-system} takes the form:
		$$
		\begin{cases}
			\alpha^2 + Q(\mathbf{w}) = 1, \\
			\alpha = a.
			\\
		\end{cases}
		$$
		Thus any solution of system \ref{Ka-system} corresponds to a solution of 
		$Q(\mathbf{w}) = 1 - a^2$ in $W$.
		By Lemma \ref{invariant_lemma} we have
		$\mu(W) = \mu(V)\mu(\langle \mathbf{v} \rangle) = \mu(\det Q)$.
	\end{proof}
	
	Let $\mathbf{x} \in V$, $\mathbf{u}, \mathbf{v} \in V$ and $a, b \in \FF_q$. 
	Consider the system of equations: 
	
	\begin{equation}\label{Pvu-system}
		\begin{cases}
			B(\mathbf{x}, \mathbf{x}) = 1, \\
			B(\mathbf{v}, \mathbf{x}) = a, \\
			B(\mathbf{u}, \mathbf{x}) = b.
			\\
		\end{cases}
	\end{equation}
	We denote the set of solutions to this system by $P_{a, b}(\mathbf{v}, \mathbf{u})$.
	\begin{thm}\label{number_of_sols_thm}
		Let $S = \{\mathbf{v} \in \FF_q^n \,|\, Q(\mathbf{v}) = 1\}$.
		Let $a, b \in \FF_q$, $\mathbf{x} \in V$, and $\mathbf{u}, \mathbf{v} \in S$, with $\mathbf{u}$ and $\mathbf{v}$ linearly independent.
		Denote 
		$$
		\mathcal{N}({n, \det Q, q, a}) = |\{\mathbf{v} \in \FF_q^n \,|\, Q(\mathbf{v}) = a\}|,
		$$
		$\delta = 1 - B(\mathbf{v}, \mathbf{u})^2$, and 
		$$
		\tau = \frac{a^2 - 2ab B(\mathbf{v}, \mathbf{u}) + b^2}{\delta}.
		$$
		Then the number of solutions of system \ref{Pvu-system} is given by
		$$
		P_{a, b}(\mathbf{v}, \mathbf{u}) = \begin{cases}
			\mathcal{N}(n-2, \delta \det Q, q, 1 - \tau) & \text{ if } \delta \ne 0, \\
			q^{n-3} & \text{ if } \delta = 0 \text{ and } a \ne b, \\
			q\mathcal{N}(n-3, -\det Q, q, 1 - a^2) & \text{ if } n > 3, \delta = 0 \text{ and } a = b, \\
			q & \text{ if } n = 3, \delta = 0, a = b \text{ and } a^2 = 1, \\
			0 & \text{ if } n = 3, \delta = 0, a = b 
			\text{ and } a^2 \ne 1. \\
		\end{cases}
		$$
		In particular, the number of solutions depends only on $a, b$, and $B(\mathbf{v}, \mathbf{u})$.
	\end{thm}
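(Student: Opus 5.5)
Let $W=\langle \mathbf{v},\mathbf{u}\rangle$, a $2$-dimensional subspace. The plan is to split $V$ along $W$ when $W$ is nondegenerate, and to handle the degenerate case by a separate analysis. The Gram matrix of $B$ on the basis $(\mathbf{v},\mathbf{u})$ is $\begin{pmatrix}1 & c\\ c & 1\end{pmatrix}$ with $c=B(\mathbf{v},\mathbf{u})$, and its determinant is $\delta=1-c^2$.

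\emph{Case $\delta\neq 0$.} Here $W$ is nondegenerate, so $V=W\oplus W^\perp$ with $\dim W^\perp=n-2$. Write $\mathbf{x}=\mathbf{w}+\mathbf{z}$ with $\mathbf{w}\in W$ and $\mathbf{z}\in W^\perp$. The last two equations only involve $\mathbf{w}$, and since the Gram matrix is invertible they determine $\mathbf{w}$ uniquely. Solving $\mathbf{w}=\alpha\mathbf{v}+\beta\mathbf{u}$ from $\alpha+\beta c=a$ and $\alpha c+\beta=b$ gives
\[
Q(\mathbf{w})=a\alpha+b\beta=\frac{a^2-2abc+b^2}{\delta}=\tau .
\]
The first equation then becomes $Q(\mathbf{z})=1-\tau$ on $W^\perp$. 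By Lemma~\ref{invariant_simple_lemma}, $\mu(W)=\mu(\delta)$, so $\mu(W^\perp)=\mu(\det Q)\mu(\delta)$ by multiplicativity. The form on $W^\perp$ is nondegenerate with determinant class $\delta\det Q$. Lemmas~\ref{number_of_sols_0} and~\ref{number_of_sols_a} depend on the determinant only through $\mu$, so the count is $\mathcal N(n-2,\delta\det Q,q,1-\tau)$.

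\emph{Case $\delta=0$, so $c=\pm1$.} Put $\mathbf{r}=\mathbf{u}-c\mathbf{v}$. Then $B(\mathbf{r},\mathbf{v})=0$, $B(\mathbf{r},\mathbf{u})=1-c^2=0$ and $Q(\mathbf{r})=0$. So $\mathbf{r}$ spans the radical of $W$, and it is nonzero by linear independence. Using $\mathbf{r}$, the system is equivalent to $Q(\mathbf{x})=1$, $B(\mathbf{v},\mathbf{x})=a$, and $B(\mathbf{r},\mathbf{x})=b-ca$.

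Split off $\mathbf{v}$ as in Theorem~\ref{number_of_sols_K_thm}: $\mathbf{x}=a\mathbf{v}+\mathbf{w}$ with $\mathbf{w}\in U=\langle\mathbf{v}\rangle^\perp$, where $\dim U=n-1$ and $\mu(U)=\mu(\det Q)$. Since $\mathbf{r}\in U$ is isotropic, the conditions become $Q(\mathbf{w})=1-a^2$ and $B(\mathbf{r},\mathbf{w})=b-ca$.

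Choose $\mathbf{s}\in U$ with $(\mathbf{r},\mathbf{s})$ a hyperbolic pair. Then $U=H\oplus U'$, where $H=\langle\mathbf{r},\mathbf{s}\rangle$, $\dim U'=n-3$, and $\mu(U')=\mu(\det Q)\mu(-1)$ by Lemma~\ref{invariant_hyperpane_lemma}; that is, $U'$ has determinant class $-\det Q$. Write $\mathbf{w}=\lambda\mathbf{r}+\eta\mathbf{s}+\mathbf{z}$ with $\mathbf{z}\in U'$. Then $B(\mathbf{r},\mathbf{w})=\eta$, so $\eta=b-ca$ is fixed, and the remaining equation is
\[
2\lambda\eta+Q(\mathbf{z})=1-a^2 .
\]
\begin{itemize}
\item If $\eta\neq 0$, then for each of the $q^{n-3}$ choices of $\mathbf{z}$ there is exactly one $\lambda$, giving $q^{n-3}$ solutions.
\item If $\eta=0$, then $\lambda$ is free, giving $q\cdot\mathcal N(n-3,-\det Q,q,1-a^2)$ solutions.
\item When $n=3$, $U'=0$, and the count is $q$ or $0$ according to whether $1-a^2=0$.
\end{itemize}

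The main obstacle is reconciling these conditions with the statement, which is phrased as ``$a\neq b$'' versus ``$a=b$''. The condition $\eta=b-ca=0$ matches $a=b$ only when $c=1$. When $c=-1$ the natural condition is $b=-a$. I expect to handle this either by noting that $\mathbf{v}$ and $-\mathbf{v}$ represent the same projective point, so that one may assume $c=1$ after replacing $\mathbf{u}$ by $-\mathbf{u}$, or by flagging that the statement implicitly normalises $c=1$.

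The final claim, that the count depends only on $a$, $b$ and $B(\mathbf{v},\mathbf{u})$, is read off from the formulas in all cases.
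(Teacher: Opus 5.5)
Your proof is correct and is essentially the paper's argument: for $\delta\neq 0$ you split $V=\langle \mathbf{v},\mathbf{u}\rangle\oplus\langle \mathbf{v},\mathbf{u}\rangle^\perp$ and compute $\mu$ of the complement as the paper does, and for $\delta=0$ you use the same decomposition $V=\langle\mathbf{v}\rangle\oplus\langle \mathbf{r},\mathbf{s}\rangle\oplus U'$, built from a hyperbolic pair through the radical vector (choosing $\mathbf{s}\perp\mathbf{v}$ gives cleaner coordinates than the paper's $\mathbf{y}$). Your remark on the case split is also right: the degenerate-case condition is really $b=B(\mathbf{v},\mathbf{u})\,a$, and the paper's ``without loss of generality $B(\mathbf{v},\mathbf{u})=1$'' is exactly the implicit normalization under which the stated $a=b$ versus $a\neq b$ split holds. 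Replacing $\mathbf{u}$ by $-\mathbf{u}$ turns $P_{a,b}$ into $P_{a,-b}$, which is harmless later because the intersection numbers sum over both signs.
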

	\begin{proof}
		Denote $W = \langle \mathbf{v}, \mathbf{u} \rangle^\perp$. 
		Consider $\mathbf{w} \in W \cap \langle \mathbf{v}, \mathbf{u} \rangle$.
		Let $\mathbf{w} = \alpha \mathbf{v} + \beta \mathbf{u}$. Then
		$$
		\begin{cases}
			0 = B(\mathbf{v}, \mathbf{w}) = \alpha Q(\mathbf{v}) + \beta B(\mathbf{v}, \mathbf{u}) = \alpha + \beta B(\mathbf{v}, \mathbf{u}),\\ 
			0 = B(\mathbf{u}, \mathbf{w}) = \beta Q(\mathbf{u}) + \alpha B(\mathbf{v}, \mathbf{u}) = \beta + \alpha B(\mathbf{v}, \mathbf{u}).
		\end{cases}
		$$
		This implies $B(\mathbf{v}, \mathbf{u})^2 = 1$. 
		
		If $B(\mathbf{v}, \mathbf{u}) \ne \pm 1$, then $W \cap \langle \mathbf{v}, \mathbf{u} \rangle = \{0\}$, and we have a decomposition
		$V = \langle \mathbf{v}, \mathbf{u} \rangle \oplus W$.
		For $\mathbf{x} \in P_{a, b}(\mathbf{v}, \mathbf{u})$, let 
		$$\mathbf{x} = \alpha \mathbf{v} + \beta \mathbf{u} + \mathbf{w}.$$
		We obtain 
		$$
		\begin{cases}
			\alpha^2 Q(\mathbf{v}) + 2 \alpha \beta B(\mathbf{v}, \mathbf{u}) + \beta^2 Q(\mathbf{u}) + Q(\mathbf{w}),\\
			\alpha Q(\mathbf{v}) + \beta B(\mathbf{v}, \mathbf{u}) = a, \\
			\beta Q(\mathbf{u}) + \alpha B(\mathbf{v}, \mathbf{u}) = b.
			\\
		\end{cases}
		$$
		Denote 
		$$
		\tau = \alpha^2 Q(\mathbf{v}) + 2 \alpha \beta B(\mathbf{v}, \mathbf{u}) + \beta^2 Q(\mathbf{u}) = \frac{ a^2 Q(\mathbf{u}) - 2ab B(\mathbf{v}, \mathbf{u}) + b^2 Q(\mathbf{v}) }{\delta}.
		$$
		
		The number of solutions of system (\ref{Pvu-system}) coincides with the number of solutions to the equation $Q(\mathbf{w}) = 1 - \tau$ in $W$.
		By Lemmas \ref{invariant_lemma} and \ref{invariant_simple_lemma} we have
		$$
		\mu(W) = \mu(V)\mu(\langle \mathbf{v}, \mathbf{u} \rangle) = \mu(\det Q)\mu(\delta).
		$$
		Hence the number of solutions of system (\ref{Pvu-system}) equals to $\mathcal{N}(n-2, \delta \det Q, q, 1 - \tau)$.
		Now let $\delta = 0$. Without loss of generality, we assume that $B(\mathbf{v}, \mathbf{u}) = 1$.
		In this case 
		$W \cap \langle \mathbf{v}, \mathbf{u} \rangle = \langle \mathbf{v} - \mathbf{u} \rangle$.
		Therefore, there exists a vector $\mathbf{y}$ such that $(\mathbf{v} - \mathbf{u}, \mathbf{y})$ forms a hyperbolic pair, and we have a decomposition (see Proposition 4.11 in \cite{Grove})
		$$
		V = \langle \mathbf{v} \rangle \oplus \langle \mathbf{v} - \mathbf{u}, \mathbf{y} \rangle \oplus W',
		$$
		where $W' \subset W$.
		Let $\mathbf{x} \in P_{a, b}(\mathbf{v}, \mathbf{u})$. Then 
		$$
		\mathbf{x} = \alpha \mathbf{v} + \beta (\mathbf{v} - \mathbf{u}) + \gamma \mathbf{y} + \mathbf{w'},
		$$
		and
		$$
		\begin{cases}
			\alpha + \gamma B(\mathbf{v}, \mathbf{y}) = a, \\
			\alpha + \gamma B(\mathbf{u}, \mathbf{y}) = b.
			\\
		\end{cases}
		$$
		Since $B(\mathbf{v}-\mathbf{u}, \mathbf{y})=1$, we have $B(\mathbf{v}, \mathbf{y}) = 1 + B(\mathbf{u}, \mathbf{y})$, and thus
		$$
		\begin{cases}
			\alpha =(b - a)B(\mathbf{u}, \mathbf{y}) + b,\\
			\gamma = a-b.
			\\
		\end{cases}
		$$
		
		$$
		Q(\mathbf{x}) = \alpha^2 + Q(\mathbf{w'}) + 2 \gamma (\alpha B(\mathbf{v}, \mathbf{y}) + B(\mathbf{y}, \mathbf{w'})) + 2\beta\gamma.
		$$
		
		\medskip
		If $a = b$, then $\gamma = 0$ and $\alpha = b$, and $Q(\mathbf{x}) = \alpha^2 + Q(\mathbf{w'})$.
		Hence any solution of system (\ref{Pvu-system})
		is obtained from $\mathbf{w'} \in W'$ satisfying the equation $Q(\mathbf{w'}) = 1 - a^2$ and an arbitrary choice of parameter $\beta \in \FF_q$.
		By Lemmas \ref{invariant_lemma} and \ref{invariant_hyperpane_lemma},
		$$
		\mu(W') = \mu(\langle \mathbf{v} - \mathbf{u}, \mathbf{y} \rangle) = \mu(-1).
		$$
		Therefore, the number of solutions of system (\ref{Pvu-system}) is equal to $q\mathcal{N}(n-3, -\det Q, q, 1 - a^2)$.
		
		\medskip
		If $a \ne b$, then $\gamma \ne 0$. Let $\mathbf{w'} \in W'$ be an arbitrary vector.
		We choose a parameter $\beta \in \FF_q$ such that $Q(\mathbf{x}) = 1$.
		$$
		\beta = \frac{1 - (\alpha^2 + Q(\mathbf{w'}) + 2 \gamma (\alpha B(\mathbf{v}, \mathbf{y}) + B(\mathbf{y}, \mathbf{w'})))}{2\gamma}.
		$$
		Thus, for any vector $\mathbf{w'} \in W'$, the parameters $\alpha, \beta, \gamma$ are uniquely determined, and the number of solutions equals to $|W'|
		= q^{n-3}$.
	\end{proof}
	
	\subsection{Construction of an Association Scheme}
	Let $q$ be a power of an odd prime.
	Let $S = \{\mathbf{v} \in \FF_q^n \,|\, Q(\mathbf{v}) = 1\}$.
	Define 
	$$S^+ = \{\{\mathbf{v}, -\mathbf{v}\} \,|\, \mathbf{v} \in S\},$$
	$$\widetilde{\mathbb{F}}_q = \{\{x, -x\} \,|\, x \in \FF_q\}.$$
	
	In what follows, we identify $\mathbf{v} \in S^+$ with the pair $\{\mathbf{v}, -\mathbf{v}\}$ whenever no ambiguity arises.
	Similarly, we identify $a \in \widetilde{\mathbb{F}}_q$ with $\{a, -a\}$.
	The expression $x = \pm a$ implies $x = a$ or $x = -a$.
	Consider the following relations on $S^+$:
	
	Assume $n > 3$ or $q \equiv 1 \pmod{4}$:
	\begin{align*}
		{\cal R}_\Delta &= \{(\mathbf{v}, \mathbf{v}) \,|\, \mathbf{v}\in S^+\}, \\ 
		{\cal R}_a &= \{(\mathbf{v}, \mathbf{u}) \,|\, \mathbf{v} \ne \mathbf{u}, B(\mathbf{v}, \mathbf{u}) = \pm a\}, \quad a \in \widetilde{\mathbb{F}}_q.
		\\ 
	\end{align*}
	
	Assume $n = 3$ and $q \equiv 3 \pmod{4}$:
	\begin{align*}
		{\cal R}_\Delta &= \{(\mathbf{v}, \mathbf{v}) \,|\, \mathbf{v}\in S^+\}, \\ 
		{\cal R}_a &= \{(\mathbf{v}, \mathbf{u}) \,|\, B(\mathbf{v}, \mathbf{u}) = \pm a\}, \quad a \in \widetilde{\mathbb{F}}_q \setminus\{\pm 1\}.
		\\ 
	\end{align*}

	\begin{thm*}
		If $n > 3$ or $q \equiv 1 \pmod{4}$, then
		$\mathcal{A} = (S^+, \{ {\cal R}_\Delta \} \cup \{ {\cal R}_a \,|\, a \in \widetilde{\mathbb{F}}_q\})$ is an association scheme with $\frac{q+1}{2}$ classes.
		If $n = 3$ and $q \equiv 3 \pmod{4}$, then
		$\mathcal{A} = (S^+, \{ {\cal R}_\Delta \} \cup \{ {\cal R}_a \,|\, a \in \widetilde{\mathbb{F}}_q \setminus \{1, -1\}\})$ is an association scheme with $\frac{q-1}{2}$ classes.
	\end{thm*}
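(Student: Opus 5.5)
We verify the axioms of a symmetric association scheme directly, with the intersection numbers computed via Theorem~\ref{number_of_sols_thm}.

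\textbf{Partition and symmetry.} First, the relations are well defined on $S^+$, because replacing $\mathbf{v}$ by $-\mathbf{v}$ or $\mathbf{u}$ by $-\mathbf{u}$ only changes the sign of $B(\mathbf{v},\mathbf{u})$. Each relation is symmetric, since $B$ is symmetric. Every pair of distinct points $\mathbf{v}\neq\pm\mathbf{u}$ lies in exactly one ${\cal R}_a$, namely $a=\pm B(\mathbf{v},\mathbf{u})$, so these relations together with ${\cal R}_\Delta$ partition $S^+\times S^+$.

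\textbf{Nonemptiness.} We must check that every listed relation is nonempty. Fix $\mathbf{v}\in S$. By Theorem~\ref{number_of_sols_K_thm}, the number of $\mathbf{u}\in S$ with $B(\mathbf{v},\mathbf{u})=a$ equals $\mathcal{N}(n-1,\det Q,q,1-a^2)$.
\begin{itemize}
\item For $a^2\neq 1$, Lemma~\ref{number_of_sols_a} gives at least $q^{n-2}-q^{(n-1)/2-1}>0$ solutions. Such $\mathbf{u}$ cannot be $\pm\mathbf{v}$, because $B(\mathbf{v},\pm\mathbf{v})=\pm1$.
\item For $a=\pm1$ we need a $\mathbf{u}\neq\pm\mathbf{v}$. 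This means $Q(\mathbf{w})=0$ must have a nonzero solution in $W=\langle\mathbf{v}\rangle^\perp$, which has dimension $n-1$ (even, since $n$ is odd in the intended application; in general use Lemma~\ref{number_of_sols_0}). For $n-1\ge 4$ the count $q^{n-2}\pm\cdots$ exceeds $1$. For $n=3$, $W$ is a plane of discriminant $\mu(\det Q)$. Since $\mathcal N$ depends on the discriminant, this plane is hyperbolic exactly when the relevant sign condition holds, which is where $q\equiv1\pmod 4$ enters. In the case $n=3$, $q\equiv3\pmod 4$ the plane may be anisotropic, so ${\cal R}_{\pm1}$ is empty and is correctly excluded.
\end{itemize}
The class counts then follow from $|\widetilde{\FF}_q|=(q+1)/2$.

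\textbf{Intersection numbers.} For $(\mathbf{v},\mathbf{u})\in{\cal R}_h$ with $h\ne\Delta$, the representatives $\mathbf{v},\mathbf{u}\in S$ are linearly independent, since they are not $\pm$ each other and both have norm $1$. Then
$$p^h_{i,j}=\frac12\sum_{a=\pm i,\;b=\pm j}\bigl|P_{a,b}(\mathbf{v},\mathbf{u})\bigr| \;-\;(\text{correction}),$$
where the sum runs over the distinct sign choices. The factor $\frac12$ accounts for the fact that $\mathbf{x}$ and $-\mathbf{x}$ give the same point of $S^+$. The correction removes $\mathbf{x}=\pm\mathbf{v}$ and $\mathbf{x}=\pm\mathbf{u}$, which occur only when $i=\pm1$ or $j=\pm1$, or when $i$ or $j$ equals $h$. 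Whether these occur depends only on $h,i,j$.

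By Theorem~\ref{number_of_sols_thm}, each $|P_{a,b}|$ depends only on $a$, $b$ and $B(\mathbf{v},\mathbf{u})$. Changing the sign of $B(\mathbf{v},\mathbf{u})$ (that is, changing representative) merely permutes the terms of the sum over signs, via $b\mapsto -b$. Hence $p^h_{i,j}$ depends only on $h$. For $h=\Delta$, we get $p^\Delta_{i,j}=0$ unless $i=j$, and $p^\Delta_{i,i}$ is the valency, which is constant by Theorem~\ref{number_of_sols_K_thm}.

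\textbf{Main obstacle.} The delicate step is the bookkeeping in the intersection-number sum. One must handle the degenerate case $\delta=0$ (the pair lies in ${\cal R}_{\pm1}$), where Theorem~\ref{number_of_sols_thm} splits into the cases $a=b$ and $a\ne b$. One must also handle coincidences: $i=0$, where $a=-a$; $i=j$, where the sign sets overlap; and $a=\pm b$. The check is that for each of these the expression is still invariant under the sign change of $B(\mathbf{v},\mathbf{u})$. The invariance of $\delta$ and of $\tau$ (up to the substitution $b\mapsto-b$) settles this.
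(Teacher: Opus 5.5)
Your treatment of the intersection numbers is the same as the paper's argument. You lift to $S$ and write $p^h_{i,j}$ as half the sum of $|P_{a,b}(\mathbf{v},\mathbf{u})|$ over the distinct sign choices, minus the contribution of $\pm\mathbf{v},\pm\mathbf{u}$. You then apply Theorem~\ref{number_of_sols_thm} and note that changing representatives only permutes the sign choices. (Precisely, $\pm\mathbf{v}$ is removed iff $i=1$ \emph{and} $j=h$, and $\pm\mathbf{u}$ iff $j=1$ \emph{and} $i=h$; your conclusion that this depends only on $h,i,j$ is right.) The paper stops there and never checks that the listed relations are nonempty and cover $S^+\times S^+$. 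You do attempt that check, and for $n=3$ it breaks down in a way that cannot be fixed without changing the statement.

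For $n=3$ and $\mathbf{v}\in S$, each $\mathbf{u}\in S\setminus\{\mathbf{v}\}$ with $B(\mathbf{v},\mathbf{u})=1$ has the form $\mathbf{v}+\mathbf{w}$ with $\mathbf{0}\ne\mathbf{w}\in\mathbf{v}^\perp$ and $Q(\mathbf{w})=0$. The plane $\mathbf{v}^\perp$ has discriminant $\det Q$ modulo squares. By Lemma~\ref{number_of_sols_0} there are therefore exactly $(q-1)\bigl(1+\mu(-\det Q)\bigr)$ such $\mathbf{u}$. So the plane is isotropic iff $\mu(\det Q)=\mu(-1)$, which $q \bmod 4$ alone does not decide. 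Your unnamed ``relevant sign condition'' is thus not $q\equiv1\pmod 4$. Also, ``the plane may be anisotropic, so ${\cal R}_{\pm1}$ is empty'' does not follow.

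In fact both halves of the statement fail when $\mu(\det Q)=-1$:
\begin{itemize}
\item For $q=5$ and $Q=x_1^2+x_2^2+2x_3^2$, we have $|S^+|=10$ and every $\mathbf{v}^\perp$ is anisotropic. Hence ${\cal R}_{\pm1}=\emptyset$, and only ${\cal R}_0,{\cal R}_2$ remain: the Petersen graph $\Gamma_{-,5}$ from the paper's own $n=3$ examples, and its complement. That gives $2$ classes, not $(q+1)/2=3$.
\item For $q=7$ and $Q=x_1^2+x_2^2-x_3^2$, the points $\mathbf{v}=(1,0,0)$ and $\mathbf{u}=(1,1,1)$ lie in $S$, with $\mathbf{u}\ne\pm\mathbf{v}$ and $B(\mathbf{v},\mathbf{u})=1$. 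So the relations listed for $q\equiv3\pmod4$ do not partition $S^+\times S^+$.
\end{itemize}
The correct dichotomy for $n=3$ is to include ${\cal R}_{\pm1}$ iff $\mu(-\det Q)=1$. This agrees with the stated one exactly when $\mu(\det Q)=1$. With that correction the rest of your argument goes through. Your treatment of $n>3$ is fine: there $\mathbf{v}^\perp$ has dimension at least $3$ and is always isotropic.
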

	\begin{proof}
	For distinct vectors $\mathbf{v}, \mathbf{u} \in S^+$ and $a,b \in \widetilde{\mathbb{F}}_q$, consider: 
	$$
	p_{a,b}(\mathbf{v}, \mathbf{u}) = \{\mathbf{x} \in S^+ \,|\, (\mathbf{x}, \mathbf{u}) \in {\cal R}_a,\;
	(\mathbf{x}, \mathbf{u}) \in {\cal R}_b \},
	$$
	or equivalently,
	$$
	p_{a,b}(\mathbf{v}, \mathbf{u}) = \{\mathbf{x} \in S^+ \setminus \{\mathbf{v}, \mathbf{u}\} \,|\, B(\mathbf{v}, \mathbf{x}) = \pm a,\, B(\mathbf{u}, \mathbf{x}) = \pm b \}.
	$$
	
	Let $P_{a, b}(\mathbf{v}, \mathbf{u})$ denote the set of solutions to system (\ref{Pvu-system}). Then
	$$
	|p_{a,b}(\mathbf{v}, \mathbf{u})| =
	\frac12\sum_{(x,y)\in S(a,b)} P_{x,y}(\alpha)
	\;-\;\delta_{a,1}\,\delta_{\alpha,b}\;-\;\delta_{b,1}\,\delta_{\alpha,a}.
	$$
	where $(\mathbf{v}, \mathbf{u}) \in {\cal R}_\alpha$
	$$
	S(a,b)=\{(\varepsilon_1 a,\varepsilon_2 b)\;\mid\;\varepsilon_1,\varepsilon_2\in\{-1, 1\}\}
	$$
	and $\delta_{x,y}$ - a Kronecker delta.
	
	By Theorem \ref{number_of_sols_thm}, for fixed $a, b$ and $\mathbf{v}, \mathbf{u} \in S$, the cardinality $|P_{a, b}(\mathbf{v}, \mathbf{u})|$ depends only on $B(\mathbf{v}, \mathbf{u})$.
	Note that $\mathbf{v} \in P_{a, b}(\mathbf{v}, \mathbf{u})$ if and only if $a = 1$ and $B(\mathbf{v}, \mathbf{u}) = b$.
	Similarly, $\mathbf{u} \in P_{a, b}(\mathbf{v}, \mathbf{u})$ if and only if $b = 1$ and $B(\mathbf{v}, \mathbf{u}) = a$.
	We extend the definition of $p_{a,b}(\mathbf{v}, \mathbf{u})$ to $\mathbf{v}, \mathbf{u} \in S^+$ and $a,b \in \widetilde{\mathbb{F}}_q \cup \{\Delta\}$.
	If $\mathbf{v} = \mathbf{u}$ and $a, b \ne \Delta$, the set $p_{a,b}(\mathbf{v}, \mathbf{v})$ is nonempty only if $a = b$, and in this case it can similarly be expressed through the number of solutions of the system
	\begin{equation*}
		\begin{cases}
			B(\mathbf{x}, \mathbf{x}) = 1, \\
			B(\mathbf{v}, \mathbf{x}) = a, \\
		\end{cases}
	\end{equation*}
	whose number of solutions, by Theorem \ref{number_of_sols_K_thm}, depends only on the value of $a$.
	Let $\mathbf{v} \ne \mathbf{u}$ and consider 
	$$
	p_{\Delta,b}(\mathbf{v}, \mathbf{u}) = \{\mathbf{x} \in S^+ \,|\, (\mathbf{x}, \mathbf{u}) \in {\cal R}_\Delta,\;
	(\mathbf{x}, \mathbf{u}) \in {\cal R}_b \}.
	$$
	Then
	$$
	p_{\Delta, b}(\mathbf{v}, \mathbf{u}) = \begin{cases}
		0 & \text{ if } Q(\mathbf{v}, \mathbf{u}) \ne \pm b, \\
		1 & \text{ if } Q(\mathbf{v}, \mathbf{u}) = \pm b.
		\\
	\end{cases}
	$$
	Thus, the numbers $|p_{a,b}(\mathbf{v},\mathbf{u})|$ depend only on the value of $B(\mathbf{v},\mathbf{u})$ in $\mathbb{F}$, or on whether $\mathbf{v}=\mathbf{u}$. Moreover, since
	$$
	P_{a,b}(-\mathbf{v},\mathbf{u}) = P_{-a,b}(\mathbf{v},\mathbf{u})
	\quad\text{and}\quad
	P_{a,b}(\mathbf{v},-\mathbf{u}) = P_{a,-b}(\mathbf{v},\mathbf{u}),
	$$
	the number $p_{\Delta,b}(\mathbf{v},\mathbf{u})$ depends only on the value of $B(\mathbf{v},\mathbf{u})$ 	in $\widetilde{\mathbb{F}}_q$, or on whether $\mathbf{v}=\mathbf{u}$; that is, only on the relation $\mathcal{R}_{\alpha}$ containing the pair $(\mathbf{v},\mathbf{u})$.
	\end{proof}
	\medskip 
			
	If $(\mathbf{v}, \mathbf{u}) \in {\cal R}_\alpha$, we denote the value $|p_{a, b}(\mathbf{v}, \mathbf{u})|$ by $p_{a, b}^\alpha$ or $p_{a, b}(\alpha)$ for
	$a, b, \alpha \in \widetilde{\mathbb{F}}_q \cup \{\Delta\}$.
	The adjacency matrix of the relation ${\cal R}_\alpha$ will be denoted by $A_\alpha$.
	\section{Strongly Regularity of the Graph $A_1$ for Odd $n > 3$, or $n = 3$ with $\mu(\det Q) = -1$}
	
	Consider the intersection number $p_{1, 1}^\alpha$.
	Note that when $\alpha \notin \{1, \Delta\}$, the following equalities hold:
	
	\begin{align*}
		p_{1, 1}^\alpha &= 2P_{1, 1}(\mathbf{v}, \mathbf{u}), \\
		p_{1, 1}^\Delta &= \mathcal{N}(n-1, \det Q, q, 0)-1, \\
		p_{1, 1}^1      &= P_{1, 1}(\mathbf{v}, \mathbf{u}) + P_{1, -1}(\mathbf{v}, \mathbf{u}) - 2 = q\mathcal{N}(n-3, -\det Q, q, 0) + q^{n-3} - 2.
	\end{align*}
	
	Let $\alpha \notin \{\pm 1, \Delta\}$.
	Then 
	\begin{align*}
		P_{1, 1}(\mathbf{v}, \mathbf{u}) = \mathcal{N}(n-2, \delta \det Q, q, 1 - \tau),
	\end{align*}
	where $\delta = 1 - \alpha^2$ and 
	$$\tau = \frac{2}{1 + \alpha}.$$
	For odd $n$, we have
	$$
	\mathcal{N}(n-1, \delta \det Q, q, 0) = q^{n-2} + \frac{q-1}{q} \mu (\delta \det Q) \cdot (\mu(-1)q)^\frac{n-1}{2},
	$$
	
	\begin{align*}
		P_{1, 1}(\mathbf{v}, \mathbf{u}) &= \mathcal{N}(n-2, \delta \det Q, q, 1 - \tau) \\
		&= q^{n-3} + \mu((-1)^\frac{n-3}{2} \delta (1 - \tau) \det Q) q^\frac{n-3}{2} \\
		&= q^{n-3} + \mu(-(-1)^\frac{n-3}{2} (\alpha-1)^2 \det Q) q^\frac{n-3}{2} \\
		&= q^{n-3} + \mu(-(-1)^\frac{n-3}{2} \det Q) q^\frac{n-3}{2} \\
		&= q^{2m-2} + \varepsilon q^{m - 1},
	\end{align*}
	where $\varepsilon = \mu(- (-1)^{m - 1} \det Q)$ and $n = 2m + 1$.
	Hence 
	$$
	p_{1, 1}^\alpha = 
	\begin{cases}
		2q^{m - 1}(q^{m - 1} + \varepsilon)  & \text{ if } \alpha \notin \{1, \Delta\}, \\
		q^{2m-1} + \varepsilon (q-1)q^{m-1} - 1 & \text{ if } \alpha = \Delta, \\
		2(q^{2m - 2} - 1) + \varepsilon (q-1)q^{m - 1} & \text{ if } \alpha = 1. \\
	\end{cases}
	$$
	
	The parameters of the graph $A_1$ coincide with those of the graph $NO^\varepsilon_{2m+1}(q)$.
	
	\bigskip
	We next consider special cases for small fields and construct an infinite series of association schemes with three classes, in which the graphs $A_1$ and $A_2$ are strongly regular, while $A_3$ is a Deza graph.
	\section{Series of Deza Graphs}	
	\begin{thm}
		The following statements hold:
		\begin{enumerate}					
			\item[(1)] Let $n \ge 4$ be even, and $q = 9$.
			Then the union of classes $A_0 + A_a$ for $a \not\in \{0, 1, 2\}$ corresponds to a Deza graph.

			\item[(2)] Let $n = 4$, and $q \ge 5$. Then the graph corresponding to relation $A_1$ is a Deza graph.
			\item[(3)] Let $n \ge 5$ be odd, and $q \ge 5$. Then the class $A_0$ corresponds to a Deza graph.

			\item[(4)] Let $n \ge 5$ be odd and $q = 13$.
			Then the graph corresponding to $A_0 + A_3 + A_4 + A_5$ is a Deza graph.

			\item[(5)] Let $n = 3$, and $q \ge 5$. Then $A_0$ corresponds to a Deza graph with parameters $a, b \in \{0, 1\}$.

			\item[(6)] Let $n = 4$, $q = 5$ and $\mu(\det Q) = -1$.
			Then $A_0 + A_1$ corresponds to a Deza graph with parameters 
			$(65, 34, 18, 15)$.
			Moreover, the scheme $\mathcal{A}$ is $P$-polynomial with ordering $(\Delta, 0, 2, 1)$, and $A_0$ corresponds to a distance-regular graph with intersection array $\{10, 6, 4;
			1, 2, 5\}$.
			
			\item[(7)] Let $n = 3$, $q = 9$ and $\mu(\det Q) = 1$.
			If $\FF_9 = \FF_3(\theta)$ with $\theta^2 - \theta - 1 = 0$, then the complement of the graph corresponding to $A_{\theta + 1}$ is a Deza graph with parameters $(45, 36, 29, 28)$ and strongly regular children $T(10)$ and $\overline{T(10)}$.
			\item[(8)] Let $n = 3$, $q = 13$ and $\mu(\det Q) = 1$.
			Then the graph corresponding to $A_0 + A_3 + A_4 + A_5$ is a Deza graph with parameters $(91, 42, 21, 17)$.

			\item[(9)] Let $n = 3$, $q = 13$ and $\mu(\det Q) = -1$.
			Then $A_0 + A_2 + A_3 + A_4$ corresponds to a Deza graph with parameters $(78, 49, 32, 28)$.

			\item[(10)] Let $n = 3$, $q = 17$ and $\mu(\det Q) = -1$.
			Then $A_4 + A_6$ corresponds to a Deza graph with parameters $(136, 36, 12, 8)$.
			
		\end{enumerate}
	\end{thm}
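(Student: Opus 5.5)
The plan is to use the criterion \cite[Theorem 4.2]{Deza_1} quoted above in every item. For a union $\sum_{f\in F}A_f$ of relations of the scheme $\mathcal{A}$, we compute, for each relation $\mathcal{R}_\alpha$ with $\alpha\neq\Delta$, the sum $\sum_{f,g\in F}p^\alpha_{f,g}$. We then check that it takes at most two values.

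All intersection numbers reduce to counts of solutions of system (\ref{Pvu-system}) through the formula from the proof of the scheme theorem:
$$|p_{a,b}(\mathbf{v},\mathbf{u})|=\tfrac12\sum_{(x,y)\in S(a,b)}P_{x,y}(\alpha)-\delta_{a,1}\delta_{\alpha,b}-\delta_{b,1}\delta_{\alpha,a}.$$
Each $P_{x,y}(\alpha)$ is given explicitly by Theorem \ref{number_of_sols_thm}. That theorem expresses it through $\mathcal{N}(n-2,\delta\det Q,q,1-\tau)$, or through the degenerate cases $\delta=0$, and these are evaluated by Lemmas \ref{number_of_sols_0} and \ref{number_of_sols_a}. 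So the first step is to write a general closed formula for $P_{x,y}(\alpha)$ when $\delta\neq0$, with $\delta=1-\alpha^2$ and $\tau=(x^2-2xy\alpha+y^2)/\delta$:
\begin{itemize}
\item for $n$ odd, $P_{x,y}(\alpha)=q^{n-3}+\mu\bigl((-1)^{\frac{n-3}{2}}\det Q\,(\delta-x^2+2xy\alpha-y^2)\bigr)q^{\frac{n-3}{2}}$, with the convention $\mu(0)$ giving the $q^{n-3}$ term and the appropriate correction;
\item for $n$ even, the count depends only on whether $1-\tau=0$, together with $\mu(\delta\det Q)$.
\end{itemize}

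The key observation is that the relevant sign is governed by the quadratic character of
$$D_\alpha(x,y)=1-\alpha^2-x^2-y^2+2xy\alpha.$$
This is the Gram determinant of $\mathbf{v},\mathbf{u},\mathbf{x}$.

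For the general items (1)--(5) I would argue structurally:
\begin{itemize}
\item In (3), $F=\{0\}$, so we need $p^\alpha_{0,0}$. Here $D_\alpha(0,0)=1-\alpha^2$, and $\mu(1-\alpha^2)$ takes only two values as $\alpha$ ranges over $\widetilde{\FF}_q\setminus\{\pm1\}$. The case $\alpha=\pm1$ gives $q\mathcal{N}(n-3,-\det Q,q,1)$. We check that this coincides with one of the two generic values.
\item In (5), $n=3$, so $\mathcal{N}(1,\cdot,q,c)\in\{0,1,2\}$ and $p^\alpha_{0,0}\in\{0,1\}$ after halving.
\item In (2), $n=4$ and $F=\{1\}$. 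The even-dimension formula makes $P_{x,y}$ depend only on whether $\tau=1$ and on $\mu(\delta\det Q)$, which again yields two values.
\item In (1), $q=9$ and $n$ is even. We sum over the finitely many $a\notin\{0,1,2\}$ and evaluate symbolically.
\end{itemize}

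Items (4) and (6)--(10) are finite computations in $\FF_5,\FF_9,\FF_{13},\FF_{17}$. For (6)--(10), $n$ is fixed, so the parameters can be checked by tabulating $\mu(D_\alpha(x,y))$ for all classes. The $P$-polynomiality and the intersection array in (6) follow from the tridiagonal shape of $p^\alpha_{0,b}$. The identification of the children in (7) as $T(10)$ and its complement follows from their strongly regular parameters on 45 vertices, checked by the same method. For (4), the dependence on $n$ enters only through the factor $q^{\frac{n-3}{2}}$ and the fixed sign $\mu\bigl((-1)^{\frac{n-3}{2}}\det Q\bigr)$. So the sum becomes $c_1q^{n-3}+c_2(\alpha)\,\varepsilon q^{\frac{n-3}{2}}$, where $c_2(\alpha)$ is a character sum over $F\times F$, and we verify that $c_2(\alpha)$ takes two values over the seven classes of $\widetilde{\FF}_{13}$.

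The main obstacle is bookkeeping. We must handle the degenerate cases $\delta=0$, the cases $1-\tau=0$ where the odd-$n$ formula switches to Lemma \ref{number_of_sols_0}, and the Kronecker corrections for $\alpha=\pm1$, all uniformly in $n$. For (4), I would first reduce everything to the table of $\mu(D_\alpha(x,y))$ over $\FF_{13}$ and treat the zero entries of $D_\alpha$ separately, since these contribute the extra $q^{\frac{n-3}{2}}$-free terms.
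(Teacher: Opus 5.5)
Your proposal is correct and follows essentially the same route as the paper. Every item is reduced, via Theorem~4.2 of \cite{Deza_1}, to intersection numbers expressed through $P_{x,y}(\alpha)$ from Theorem~\ref{number_of_sols_thm}: (3) uses the sign of $\mu(1-\alpha^2)$ together with the degenerate case $B(\mathbf{v},\mathbf{u})=\pm1$, (5) uses the one-dimensionality of $\langle\mathbf{v},\mathbf{u}\rangle^\perp$, (4) uses the character sums of $1-\alpha^2-x^2-y^2+2\alpha xy$, and (6)--(10) are direct finite computations (SageMath in the paper). One small addition: identifying the children in (7) as $T(10)$ and $\overline{T(10)}$ from their parameters alone requires Chang's theorem that $T(10)$ is the unique strongly regular graph with parameters $(45,16,8,4)$, which you should cite explicitly.
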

	\begin{proof}
		Denote
		$$
		\varepsilon = 
		\begin{cases}
			\mu((-1)^\frac{n-1}{2}\det Q), & r\text{ odd,} \\
			\mu((-1)^\frac{n}{2} \det Q), & r\text{ even}.
		\end{cases}
		$$
		
		Statements (6) -- (10) are verified directly using SageMath \cite{sage}.
		Statement (5) follows from the fact that for linearly independent anisotropic vectors $\mathbf{v}, \mathbf{u} \in \FF_q^3$, we have $\dim \langle \mathbf{v}, \mathbf{u} \rangle ^\perp = 1$.

		We can consider the parameters of small graphs from Statement (5). Let $n = 3$, $q \ge 5$ and $\Gamma_{\varepsilon, q}$ be a graph with adjacency matrix $A_0$.
		\medskip

		For $q = 5$, the graph $\Gamma_{+, 5} = 5K_3$ and $\Gamma_{-, 5}$ is the Petersen graph.
		\medskip

		For $q = 7$, the graph $\Gamma_{+, 7}$ is a distance-regular graph with intersection array $\{4,2,2;1,1,2\}$ and the graph $\Gamma_{-, 7}$ is a distance-regular graph with intersection array $\{3, 2, 2, 1; 1, 1, 1, 2\}$.
		\medskip

		For $q = 9$, the graph $\Gamma_{+, 9}$ is a distance-regular graph with intersection array $\{4, 2, 2, 2; 1, 1, 1, 2\}$ and the graph $\Gamma_{-, 9}$ is a distance-regular graph with intersection array $\{5, 4, 2;1, 1, 4\}$.
		\medskip

		The graph $\Gamma_{\varepsilon, q}$ is a Deza graph with parameters:
		$$
		\left(\frac{q^2 + \varepsilon s q}{2}, \frac{q - \varepsilon s}{2} , 0, 1\right), \;\; 	\text	{where } s = 
		\begin{cases}
		    1, & q \equiv 1 \pmod{4}, \\
		    -1, & q \equiv -1 \pmod{4}.  
		\end{cases}
		$$

		Let $n = 2m+1\ge 5$ be odd and $q = 13$. Consider $A_0 + A_3 + A_4 + A_5$.
		Let $a, b \in \{0, 3, 4, 5\}$ and $\alpha \in \widetilde{\mathbb{F}}_q$.
		By direct computation, for $\alpha\neq 1$, we have
		$$
			\sum_{a,b\in\{0,3,4,5\}} p_{a,b}^{\alpha}
			=
			\frac12\left(
			49\cdot 13^{2m-2}
			+\varepsilon\,13^{m-1}
			\sum_{x,y\in\{0,\pm3,\pm4,\pm5\}}
			\mu\!\left(1-\alpha^2-x^2-y^2+2\alpha xy\right)
			\right),
		$$		
		$$
			\sum_{x,y\in\{0,\pm3,\pm4,\pm5\}}
			\mu\!\left(1-\alpha^2-x^2-y^2+2\alpha xy\right)
			=
			\begin{cases}
				-7, & \alpha\in\{3,5\},\\
				-15, & \alpha\in\{2,4,6\},
			\end{cases}
		$$
		$$
			\sum_{a,b\in\{0,3,4,5\}} p_{a,b}^{\alpha}
			=
			\begin{cases}
				\dfrac{49\cdot 13^{2m-2}-7\varepsilon\,13^{m-1}}{2},
				& \alpha\in\{3,5\},\\[1.2ex]
				\dfrac{49\cdot 13^{2m-2}-15\varepsilon\,13^{m-1}}{2},
				& \alpha\in\{2,4,6\}.
			\end{cases}
		$$
		
		And
		$$
		p_{a,b}^{1} = \frac{49\cdot13^{2m-2} - \varepsilon 7\cdot13^{m-1}}{2}
		$$
		Thus, the graph corresponding to $A_0 + A_3 + A_4 + A_5$ is a Deza graph with parameters:
		\begin{align*}
			v & = \frac{13^{2m+2} + \varepsilon \cdot13^{m+1}}{2}, \\
			k & = \frac{7\cdot13^{2m+1} - \varepsilon 7\cdot13^{m}}{2}, \\
			a & =\frac{49\cdot13^{2m-2} - \varepsilon 7\cdot13^{m-1}}{2}, \\
			b & =\frac{49\cdot13^{2m-2} - \varepsilon 15\cdot13^{m-1}}{2}.
		\end{align*}
		This proves Statement (4).
		
		Let $n = 2m+1 \ge 5$ be odd and $q \ge 5$.
		Then
		$$2p_{0, 0}(\alpha) =
		\begin{cases}
			q\mathcal{N}(n-3, -\det Q, q, 1) & \alpha = 1, \\
			\mathcal{N}(n-2, (1-\alpha)\det Q, q, 1) & \alpha \ne 1.
		\end{cases}
		$$
		Note that when $\mu(1-\alpha) = -1$, the following equality holds:
		$$\mathcal{N}(n-2, (1-\alpha)\det Q, q, 1) = q\mathcal{N}(n-3, -\det Q, q, 1).$$
		Thus, $A_0$ corresponds to a Deza graph.
		This proves Statement (3).
		
		Let $n = 4$, and $q \ge 5$.
		In this case we have
		$$p_{1, 1}(\alpha) =
		\begin{cases}
			q\mathcal{N}(1, -\det Q, q, 1) - 2 & \alpha = 1, \\
			2\mathcal{N}(2, (1-\alpha) \det Q, q, -1) & \alpha \ne 1.
		\end{cases}
		$$
		Note that when $\mu(1-\alpha) = \mu(-1)$, we have
		$$
		2\mathcal{N}(2, (1-\alpha) \det Q, q, -1) = q\mathcal{N}(1, -\det Q, q, 1) - 1.
		$$
		Thus, by Theorem 4.2 in \cite{Deza_1}, $A_1$ corresponds to a Deza graph with parameters 
		$$
		\left( \frac{q^3 + \varepsilon q}{2}, q^2-1, 2q+2, 2q-2 \right).
		$$ 
		
		The spectrum of this graph has the following form		
		$$
		\left\{(q^2-1)^{1},\;(q-1)^\frac{(q+1)^2(q-3)}4,\;(0)^{q^2},\;(-(q+1))^\frac{(q-1)^3}{4}\right\}, \text{ if } \varepsilon = 1,
 		$$
		and
		$$
		\left\{(q^2-1)^{1},\;(q-1)^\frac{(q-1)(q^2+1)}{4},\;(-2)^{q^2},\;(-(q+1))^\frac{(q-3)(q^2+1)}{4}\right\} , \text{ if } \varepsilon = -1.
		$$
		This proves Statement (2).
		
		Let $n = 2m \ge 4$ be even, and $q = 9$.
		Let $\FF_9 = \FF_3(\theta)$, where $\theta^2 - \theta - 1 = 0$.
		Then, as representatives of $\widetilde{\mathbb{F}}_q$, we may choose
		$$
		\widetilde{\mathbb{F}} = \{0, 1, \theta, \theta+1, \theta-1\}.
		$$
		By direct computation we find 
		$$\sum_{a, b \in \{0, \theta\}}p_{a, b}(\alpha) =
		\begin{cases}
			\frac{9^{2m-2} - \varepsilon 9^{m-1}}{2} & \alpha \in \{0, 1, \theta+1, \theta-1\}, \\
			\frac{9^{2m-2} + \varepsilon 9^{m-1}}{2} & \alpha = \theta.
		\end{cases}
		$$
		Similarly,
		$$\sum_{a, b \in \{0, \theta+1\}}p_{a, b}(\alpha) =
		\begin{cases}
			\frac{9^{2m-2} + \varepsilon 9^{m-1}}{2} & \alpha \in \{\theta, \theta-1\}, \\
			\frac{9^{2m-2} + \varepsilon 3\cdot9^{m-1}}{2} & \alpha \in \{0, 1, \theta + 1\}.
			\\
		\end{cases}
		$$
		Note that $(\theta-1)^\phi = -\theta$, where $\phi$ is the Frobenius automorphism.
		Hence $$A_0 + A_\theta \cong A_0 + A_{\theta-1}.$$

		Then the graph corresponding to $A_0 + A_a$ for $a \not\in \{0, 1, 2\}$ is a Deza graph with parameters: 
		$$
		A_0 + A_\theta: \,\, 
		\left( 
		\frac{9^{2m-1} - \varepsilon 9^{m-1}}{2}, 
		\frac{3\cdot9^{2m-2} - \varepsilon 9^{m-1}}{2},  
		\frac{9^{2m-2} - \varepsilon 9^{m-1}}{2},
		\frac{9^{2m-2} + \varepsilon 9^{m-1}}{2}
		\right), 
		$$
		
		$$
		A_0 + A_{\theta+1}: \,\, 
		\left( 
		\frac{9^{2m-1} - \varepsilon 9^{m-1}}{2}, 
		\frac{3\cdot9^{2m-2} + \varepsilon 3\cdot9^{m-1}}{2},  
		\frac{9^{2m-2} + \varepsilon 3\cdot9^{m-1}}{2},
		\frac{9^{2m-2} + \varepsilon 9^{m-1}}{2}
		\right).
		$$
	\end{proof}

	\section{The Case $q = 5$}
	Let $\FF = \mathrm{GF}(5)$ be the finite field with $5$ elements.
	Let $V = \FF^n$ be the vector space of dimension $n$ over $\FF$.
	Let $\mathbf{0} = (0, 0, \dots, 0)$ denote the zero vector in $V$.
	For vectors $\mathbf{v}, \mathbf{u} \in V$, let $B(\mathbf{u}, \mathbf{v})$ be the bilinear form associated with the quadratic form $Q$.
	Denote
	$$
	\varepsilon = 
	\begin{cases}
		\mu((-1)^\frac{n-1}{2}\det Q), & r\text{ odd,} \\
		\mu((-1)^\frac{n}{2} \det Q), & r\text{ even},
	\end{cases}
	$$
	where $\mu$ denotes the quadratic character of $\FF$.
	Consider the set $S^+ = \{\{\mathbf{v}, -\mathbf{v}\} \mid \mathbf{v} \in S\}$.
	
	We define the following relations on $S^+$:
	\begin{align*}
		{\cal R}_\Delta &= \{(\mathbf{v}, \mathbf{v}) \,|\, \mathbf{v}\in S^+\}, \\ 
		{\cal R}_0 &= \{(\mathbf{v}, \mathbf{u}) \,|\, B(\mathbf{v}, \mathbf{u}) = 0\}, \\ 
		{\cal R}_1 &= \{(\mathbf{v}, \mathbf{u}) \,|\, \mathbf{v} \ne \mathbf{u}\,,\, B(\mathbf{v}, \mathbf{u}) = \pm 1\}, \\ 
		{\cal R}_2 &= \{(\mathbf{v}, \mathbf{u}) \,|\, B(\mathbf{v}, \mathbf{u}) = \pm 2\}.
		\\ 
	\end{align*}
	
	Then $\mathcal{A}_5 = (S^+, \{{\cal R}_\Delta, {\cal R}_0, {\cal R}_1, {\cal R}_2\})$ is an association scheme with $3$ classes.
	Let $A_i$ denote the adjacency matrix of the relation ${\cal R}_i$.
	
	\begin{thm*}
		Let $n > 3$ be odd.
		Then $A_2$ is the adjacency matrix of an edge-regular Deza graph with parameters: 
		$$
		\left(
		\frac{5^{n-1} + \varepsilon 5^{\frac{n-1}{2}}}{2}, 
		5^{n-2} - \varepsilon 5^{\frac{n-3}{2}}, 
		2(5^{n-3} - \varepsilon 5^{\frac{n-3}{2}}), 
		2\cdot5^{n-3} - \varepsilon 5^{\frac{n-3}{2}}
		\right).
		$$
		
		Moreover, $A_0$ and $A_1 + A_2$ are adjacency matrices of complementary strongly regular graphs.
		In addition, the graphs corresponding to $A_0$ and $A_1 + A_2$ are the children of the graph $A_2$, and $A_0$ represents a strongly regular graph with parameters: 
		$$
		\left(
		\frac{5^{n-1} + \varepsilon 5^{\frac{n-1}{2}}}{2}, 
		\frac{5^{n-2} - \varepsilon 5^{\frac{n-3}{2}}}{2}, 
		\frac{5^{n-3} + \varepsilon 5^\frac{n-3}{2}}{2}, 
		\frac{5^{n-3} - \varepsilon 5^\frac{n-3}{2}}{2}
		\right).
		$$
		
	\end{thm*}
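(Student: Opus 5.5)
The plan is to compute the intersection numbers $p^\alpha_{2,2}$ of the scheme $\mathcal{A}_5$ for $\alpha\in\{\Delta,0,1,2\}$ directly from Theorems~\ref{number_of_sols_K_thm} and~\ref{number_of_sols_thm}. Write $n=2m+1$. Over $\FF_5$ the squares are $\{1,4\}$, so $\mu(2)=\mu(3)=-1$ and $\mu(-1)=1$. This leaves only a handful of cases.

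\textbf{Valency.} From Theorem~\ref{number_of_sols_K_thm}, the valency of $A_2$ is
\[
k=\tfrac12\bigl(K_2+K_{-2}\bigr)=\mathcal{N}(2m,\det Q,5,1-4)=\mathcal{N}(2m,\det Q,5,2).
\]
By Lemma~\ref{number_of_sols_a}(2) this equals $5^{2m-1}-\mu((-1)^m\det Q)\,5^{m-1}$. One must check that $\mu((-1)^m\det Q)$ coincides with the paper's $\varepsilon=\mu((-1)^{(n-1)/2}\det Q)$, which is immediate. The number of vertices is $|S|/2$, computed from Lemma~\ref{number_of_sols_a}(1), giving $(5^{n-1}+\varepsilon 5^{(n-1)/2})/2$. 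In the same way I will record the valencies of $A_0$ and $A_1$.

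\textbf{Common neighbours in $A_2$.} For $(\mathbf{v},\mathbf{u})\in\mathcal R_\alpha$ with $\alpha\neq\Delta$, the count is
\[
p^\alpha_{2,2}=\tfrac12\sum_{x,y=\pm2}P_{x,y}(\alpha).
\]
No correction terms appear, since $\mathbf{v},\mathbf{u}$ are not in $\mathcal R_2$ with themselves.

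For $\alpha\in\{0,2\}$ we have $\delta=1-\alpha^2\in\{1,2\}\neq0$. Each term is then $\mathcal{N}(2m-1,\delta\det Q,5,1-\tau)$ with $\tau=(x^2-2xy\alpha+y^2)/\delta$. By Lemma~\ref{number_of_sols_a}(1) such a term equals $5^{2m-2}+\mu(1-\tau)\,\mu((-1)^{m-1}\delta\det Q)\,5^{m-1}$, with the convention that the correction term is replaced by the $\mathcal N(\cdot,\cdot,\cdot,0)$ formula when $1-\tau=0$. I will tabulate $1-\tau$ for the four sign choices:
\begin{itemize}
\item for $\alpha=0$: $\tau=8\equiv3$, so $1-\tau=3$ and $\mu=-1$;
\item for $\alpha=2$: $\delta=-3=2$, and $\tau=(8\mp 16)/2$ gives the values $1-\tau\in\{0,\,2\}$ depending on the sign of $xy$.
\end{itemize}
When $1-\tau=0$ the odd-dimensional count is exactly $5^{2m-2}$ (Lemma~\ref{number_of_sols_0}), with no correction term.

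For $\alpha=1$ we have $\delta=0$. Then $P_{x,y}=5^{2m-2}$ when $x\neq y$, and $P_{x,x}=5\,\mathcal{N}(2m-2,-\det Q,5,1-4)$, evaluated by Lemma~\ref{number_of_sols_a}(2).

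Assembling these gives three numbers $p^0_{2,2}$, $p^1_{2,2}$, $p^2_{2,2}$. The claim is that $p^1_{2,2}=p^2_{2,2}=2\cdot5^{n-3}-\varepsilon5^{(n-3)/2}$ and $p^0_{2,2}=2(5^{n-3}-\varepsilon5^{(n-3)/2})$. Equality of the values on $\mathcal R_1\cup\mathcal R_2$ yields, via the cited Theorem~4.2 of \cite{Deza_1}, a Deza graph whose children are $A_0$ and $A_1+A_2$. It is edge-regular because the value on $\mathcal R_2$ is constant.

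\textbf{Strong regularity of $A_0$.} Its complement is $A_1+A_2$, so it suffices to treat $A_0$. I compute $p^\alpha_{0,0}$ in the same way; the proof of statement (3) of the previous theorem already gives $2p_{0,0}(\alpha)$. The key observation is that $\mu(1-\alpha)=-1$ for $\alpha=\pm2$, because $1-2=-1$ is a square but the relevant quantity is $\mu(\delta)$ with $\delta=1-4=2$. This makes the value on $\mathcal R_2$ coincide with the value on $\mathcal R_1$, leaving only two values: $\lambda$ on $\mathcal R_0$ and $\mu$ on $\mathcal R_1\cup\mathcal R_2$. Hence $A_0$ is strongly regular, and its complement $A_1+A_2$ is as well.

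\textbf{Main obstacle.} The difficulty is bookkeeping, not conceptual. One has to track the character factors $\mu((-1)^{\cdot}\delta\det Q)$ and convert them all to the single sign $\varepsilon$. One also has to handle the degenerate value $1-\tau=0$ correctly in the case $\alpha=2$. I would settle both by a careful case table, and check the result against $n=5$ numerically, either by hand or with the SageMath verification the paper already uses.
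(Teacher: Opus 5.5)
Your strategy matches the paper's: compute $p^\alpha_{2,2}$ and $p^\alpha_{0,0}$ from Theorem~\ref{number_of_sols_thm} and Lidl's counts, then observe that the values on $\mathcal R_1$ and $\mathcal R_2$ coincide. However, you miscompute the one entry on which the whole theorem depends. For $\alpha=2$ you have $\delta\equiv 2$ and $\tau=(8\mp16)/2$. Modulo $5$, $8-16\equiv 2$ and $8+16\equiv 4$, so $\tau\in\{1,2\}$ and $1-\tau\in\{0,4\}$, not $\{0,2\}$.

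This is not cosmetic. The terms with $x=y$ have $1-\tau=0$, so they equal $5^{n-3}$. The terms with $x=-y$ have $1-\tau=4$, a square, and $\mu(\delta)=\mu(2)=-1$, so they equal $5^{n-3}-\varepsilon5^{(n-3)/2}$. This gives $p^2_{2,2}=2\cdot5^{n-3}-\varepsilon5^{(n-3)/2}=p^1_{2,2}$, in agreement with the paper's $P_{2,2}(2)=5^{n-3}$ and $P_{2,-2}(2)=5^{n-3}-\varepsilon5^{(n-3)/2}$. With your value $1-\tau=2$, the character factor becomes $\mu(2)^2=+1$ and you get $p^2_{2,2}=2\cdot5^{n-3}+\varepsilon5^{(n-3)/2}$. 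Then $p^0_{2,2}$, $p^1_{2,2}$, $p^2_{2,2}$ are three distinct numbers and $A_2$ is not Deza. So your stated target for $p^2_{2,2}$ does not follow from your own table. With the corrected entry it does, and the rest of your computation for $\alpha=0$ and $\alpha=1$ is right.

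Two smaller points.

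\begin{itemize}
\item In the $A_0$ paragraph, your sentence about $\mu(1-\alpha)$ contradicts itself. The relevant quantity is $\mu(1-\alpha^2)=\mu(2)=-1$; the $(1-\alpha)$ in the paper's proof of statement~(3) should be read as $1-\alpha^2$. With this, $P_{0,0}(2)=\mathcal N(n-2,2\det Q,5,1)=5^{n-3}-\varepsilon5^{(n-3)/2}$, which equals $5\,\mathcal N(n-3,-\det Q,5,1)=P_{0,0}(1)$, as required.
\item You do not need Theorem~4.2 of \cite{Deza_1}, since $A_2^2=\sum_k p^k_{2,2}A_k$ holds directly in the scheme. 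You should, however, note that $p^0_{2,2}\neq p^1_{2,2}$ (they differ by $\varepsilon5^{(n-3)/2}$), so that $A_0$ and $A_1+A_2$ really are the two children.
\end{itemize}
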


	By Lemma \ref{number_of_sols_a}, we have:
	$$
	|\{\mathbf{v} \in \FF_q^n \,|\, Q(\mathbf{v}) = 1\}|
	= 
	\begin{cases}
		5^{n-1} + \varepsilon 5^\frac{n-1}{2}, & n\text{ odd}, \\
		5^{n-1} - \varepsilon 5^{\frac{n}{2}-1}, & n\text{ even}.
	\end{cases}
	$$
	Thus,
	$$
	|S^+|
	= \frac{1}{2}
	\begin{cases}
		5^{n-1} + \varepsilon 5^{\frac{n-1}{2}}, & n\text{ odd}, \\
		5^{n-1} - \varepsilon 5^{\frac{n}{2}-1}, & n\text{ even}.
	\end{cases}
	$$
	
	\textbf{Multiplicities:}
	We have 
	$$
	k_\alpha = p_{\alpha, \alpha}(\Delta) = \frac{1}{2} 
	\left|\bigcup\limits_{\varepsilon_1, \varepsilon_2 \in \{-, +\}^2} P_{\varepsilon_1 \alpha,\varepsilon_2 \alpha}(\Delta) \setminus \{\pm \mathbf{v}, \pm \mathbf{u}\} \right|.
	$$
	Hence
	$$
	k_\alpha = \begin{cases}
		1 & \alpha = \Delta, \\
		\frac{K_{\alpha}}{2}  & \alpha = 0, \\
		K_{\alpha} - 1  & \alpha = 1, \\
		K_{\alpha} & \text{ otherwise},
	\end{cases}
	$$
	where for $a \in \FF$, $K_{a}$ is the number of solutions of the system 
	$$
	\begin{cases}
		B(\mathbf{x}, \mathbf{x}) = 1, \\
		B(\mathbf{v}, \mathbf{x}) = a, \\
	\end{cases}
	$$
	and by Theorem \ref{number_of_sols_K_thm} we have $K_{a} = \mathcal{N}(n-1, \det Q, q, 1-a^2)$.

	Until the end of the work we will let $n = 2m+1$ be odd. Then we have:
	\begin{align*}
		|S^+|
		&= \frac{5^{n-1} + \varepsilon 5^{\frac{n-1}{2}}}{2},\\
		k_\Delta   &= 1,\\
		k_0   &= \frac{\mathcal{N}(n-1, \det Q, 5, 1)}{2} = \frac{1}{2} \left( 5^{n-2} - \varepsilon 5^{\frac{n-3}{2}} \right),\\
		k_1   &= \mathcal{N}(n-1, \det Q, 5, 0) - 1 = 5^{n-2} + \varepsilon 4 \cdot 5^{\frac{n-3}{2}} - 1,\\
		k_2   &= \mathcal{N}(n-1, \det Q, 5, 2) = 5^{n-2} - \varepsilon 5^{\frac{n-3}{2}}.\\
	\end{align*}

	Let $\mathbf{v}, \mathbf{u} \in \FF^n$ be distinct vectors.
	Let $a, b \in \FF_q$. Using Theorem \ref{number_of_sols_thm} and Lemmas \ref{number_of_sols_0}--\ref{number_of_sols_a}, we compute the number of solutions to system \ref{Pvu-system} for the case $Q(\mathbf{v}) = Q(\mathbf{u}) = 1$ and $B(\mathbf{v}, \mathbf{u}) \in \{0, 1, 2\}$.
	
	\medskip
	\textbf{Case $B(\mathbf{v}, \mathbf{u}) = 0$:}
	
	By Theorem \ref{number_of_sols_thm} we have
	$$
	P_{a, b}(0) = \mathcal{N}(n-2, \det Q, 5, 1 - (a^2+b^2)).
	$$
	
	Hence
	$$
	P_{a, b}(0) = 	
	\begin{cases}
		5^{n-3} + \varepsilon 5^\frac{n-3}{2},   & (a,b) \in \{(0, 0), (\pm 1, \pm 1), (\pm 1, \pm 2)\}, \\
		5^{n-3},				       & (a,b) \in \{(0, \pm 1)\}, \\
		5^{n-3} - \varepsilon 5^{\frac{n-3}{2}}, & (a,b) \in \{(0, \pm 2), (\pm 2, \pm 2)\}.
		\\
	\end{cases}
	$$
	
	\medskip
	\textbf{Case $B(\mathbf{v}, \mathbf{u}) = 1$:} 
	
	By Theorem \ref{number_of_sols_thm} we have
	$$
	P_{a, b}(1) = 	
	\begin{cases}
		5^{n-3} + \varepsilon 4 \cdot 5^\frac{n-3}{2}, & (a,b) = (1, 1), \\	
		5^{n-3} - \varepsilon 5^{\frac{n-3}{2}},       & (a,b) \in \{(0, 0), (2, 2)\}, \\		
		5^{n-3},                           & \text{ if } a \ne b.
	\end{cases}
	$$
	
	\medskip
	\textbf{Case $B(\mathbf{v}, \mathbf{u}) = 2$:} 
	
	By Theorem \ref{number_of_sols_thm} we have
	$$
	P_{a, b}(2) = 5^{n-3} + \varepsilon \mu\left(2 - a^2 - ab - b^2 \right) 5^{\frac{n-3}{2}}.
	$$
	Hence
	$$
	P_{a, b}(2) =
	\begin{cases}
		5^{n-3}                     & (a,b) \in \{(1, 2), (-1, -2), (2, 2), (-2, -2)\}, \\
		5^{n-3} + \varepsilon 5^{\frac{n-3}{2}} & (a,b) \in \{(0, \pm 1), (\pm 1, \pm 1), (1, -2), (-1, 2)\}, \\
		5^{n-3} - \varepsilon 5^{\frac{n-3}{2}} & (a,b) \in \{(0,0), (0, \pm 2), (2, -2)\}.
	\end{cases}
	$$
	
	The number $p_{i,j}^k$ is the intersection number for $i,j,k \in \{0,1,2\}$.
	Using the formula
	$$
	|p_{a,b}(\mathbf{v}, \mathbf{u})| = p_{a, b}^{\alpha}
	=
	\frac12\sum_{(x,y)\in S(a,b)} P_{x,y}(\alpha)
	\;-\;\delta_{a,1}\,\delta_{\alpha,b}\;-\;\delta_{b,1}\,\delta_{\alpha,a}.
	$$
	where $(\mathbf{v}, \mathbf{u}) \in {\cal R}_\alpha$
	$$
	S(a,b)=\{(\varepsilon_1 a,\varepsilon_2 b)\;:\;\varepsilon_1,\varepsilon_2\in\{-1, 1\}\},
	$$
	and $\delta_{x,y}$ - a Kronecker delta.
	We obtain the intersection numbers of the scheme $\mathcal{A}_5$.
	Let $B_k$ denote the intersection matrix for the relation ${\cal R}_k$.
	Then:
	
	$$
	B_\Delta = 
	\begin{pmatrix}
		1 & 0 & 0 & 0 \\
		0 & 1 & 0 & 0 \\
		0 & 0 & 1 & 0 \\
		0 & 0 & 0 & 1
	\end{pmatrix}
	$$
	
	$$
	B_0 = 
	\begin{pmatrix}
		0 & k_0 & 0 & 0 \\
		1 & \frac{P_{0,0}(0)}{2} & P_{1,0}(0) - 1 & P_{2,0}(0) \\
		0 & \frac{P_{0,0}(1)}{2} & P_{1,0}(1) & P_{2,0}(1) \\
		0 & \frac{P_{0,0}(2)}{2} & P_{1,0}(2) & P_{2,0}(2)
	\end{pmatrix}
	$$
	$$
	B_1 =
	\begin{pmatrix}
		0 & 0 & k_1 & 0 \\
		0 & P_{0,1}(0) - 1 & 2P_{1,1}(0) & 2P_{1,2}(0) \\
		1 & P_{0,1}(1) & P_{1,1}(1) + P_{-1,1}(1) - 2 & 2P_{1,2}(1) \\
		0 & P_{0,1}(2) & 2P_{1,1}(2) & P_{1,2}(2) + P_{-1,2}(2) - 1
	\end{pmatrix}
	$$
	$$
	B_2 =
	\begin{pmatrix}
		0 
		& 0 & 0 & k_2 \\
		0 & P_{0,2}(0) & 2P_{1,2}(0) & 2P_{2,2}(0) \\
		0 & P_{0,2}(1) & 2P_{1,2}(1) & P_{2,2}(1) + P_{-2,2}(1) \\
		1 & P_{0,2}(2) & P_{1,2}(2) + P_{-1,2}(2) - 1 & P_{2,2}(2) + P_{2,-2}(2)
	\end{pmatrix}
	$$
	
	Let $h = 5^\frac{n-3}{2}$. Thus, the scheme $\mathcal{A}_5$ has parameters: 
	\begin{alignat*}{3}
		p_{0,0}^0 &= \tfrac{1}{2} h^{2} +\tfrac{1}{2} {\varepsilon} h,
		&\quad
		p_{0,0}^1 &= \tfrac{1}{2} h^{2} - \tfrac{1}{2} {\varepsilon} h,
		&\quad
		p_{0,0}^2 &= \tfrac{1}{2} h^{2} - \tfrac{1}{2} {\varepsilon} h, \\
		p_{0,1}^0 &= h^{2} - 1,
		&\quad
		p_{0,1}^1 &= h^{2},
		&\quad
		p_{0,1}^2 &= h^{2} + {\varepsilon} h, \\
		p_{0,2}^0 &= h^{2} - {\varepsilon} h,
		&\quad
		p_{0,2}^1 &= h^{2},
		&\quad
		p_{0,2}^2 &= h^{2} - {\varepsilon} h, \\
		p_{1,0}^0 &= h^{2} - 1,
		&\quad
		p_{1,0}^1 &= h^{2},
		&\quad
		p_{1,0}^2 &= h^{2} + 
		{\varepsilon} h, \\
		p_{1,1}^0 &= 2 h^{2} + 2 {\varepsilon} h,
		&\quad
		p_{1,1}^1 &= 2 h^{2} + 4 {\varepsilon} h - 2,
		&\quad
		p_{1,1}^2 &= 2 h^{2} + 2 {\varepsilon} h, \\
		p_{1,2}^0 &= 2 h^{2} + 2 {\varepsilon} h,
		&\quad
		p_{1,2}^1 &= 2 h^{2},
		&\quad
		p_{1,2}^2 &= 2 h^{2} + {\varepsilon} h - 1, \\
		p_{2,0}^0 &= h^{2} - {\varepsilon} h,
		&\quad
		p_{2,0}^1 &= h^{2},
		&\quad
		p_{2,0}^2 &= h^{2} - {\varepsilon} h, \\
		p_{2,1}^0 &= 2 h^{2} + 2 {\varepsilon} h,
		&\quad
		p_{2,1}^1 &= 2 h^{2},
		&\quad
		p_{2,1}^2 &= 2 h^{2} + {\varepsilon} h - 1, \\
		p_{2,2}^0 &= 2 h^{2} - 2 {\varepsilon} h,
		&\quad
		p_{2,2}^1 &= 2 h^{2} - {\varepsilon} h,
		&\quad
		p_{2,2}^2 &= 2 h^{2} - {\varepsilon} h.
	\end{alignat*}
	
	and the eigenvalue matrices are:
	$$
	P = \begin{pmatrix}
		1 & \tfrac{5}{2} h^{2} - \tfrac{1}{2} h & 5 h^{2} + 4 h - 1 & 5 h^{2} - h \\
		1 & 2h & -h - 1 & -h \\
		1 & -h & 3h - 1 & -2h \\
		1 & -h & -h - 1 & 2h
	\end{pmatrix}, \quad \text{ if } \varepsilon = 1,
	$$
	$$
	P = \begin{pmatrix}
		1 & \tfrac{5}{2} h^{2} + \tfrac{1}{2} h & 5 h^{2} - 4 h - 1 & 5 h^{2} + h \\
		1 & h & h - 1 & -2h \\
		1 & h & -3h - 1 & 2h \\
		1 & 
		-2h & h - 1 & h
	\end{pmatrix}, \quad \text{ if } \varepsilon = -1.
	$$
	
	\subsection{Proof of Main Theorem}
	\begin{lem}
		$p^1_{22} = p^2_{22} = 2\cdot5^{n-3} - \varepsilon 5^{\frac{n-3}{2}}.$
	\end{lem}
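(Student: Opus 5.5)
The plan is to compute both numbers directly from the formula for $p^\alpha_{a,b}$ used for the scheme $\mathcal{A}_5$. For $a=b=2$ it reads
$$
p^\alpha_{2,2}=\tfrac12\bigl(P_{2,2}(\alpha)+P_{2,-2}(\alpha)+P_{-2,2}(\alpha)+P_{-2,-2}(\alpha)\bigr).
$$
Since $a=b=2\neq 1$, there are no Kronecker corrections. The map $\mathbf{x}\mapsto-\mathbf{x}$ gives a bijection $P_{a,b}(\mathbf{v},\mathbf{u})\to P_{-a,-b}(\mathbf{v},\mathbf{u})$, so $P_{-2,-2}=P_{2,2}$ and $P_{-2,2}=P_{2,-2}$. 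Hence
$$
p^\alpha_{2,2}=P_{2,2}(\alpha)+P_{2,-2}(\alpha).
$$
Throughout, put $n=2m+1$ and $h=5^{(n-3)/2}=5^{m-1}$. Note that in $\mathrm{GF}(5)$ we have $\mu(-1)=1$, $\mu(2)=\mu(3)=-1$, and $\varepsilon=\mu((-1)^m\det Q)$. The goal is to show both values equal $2h^2-\varepsilon h$.

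\emph{Case $\alpha=1$.} Here $\delta=1-B(\mathbf{v},\mathbf{u})^2=0$, so Theorem~\ref{number_of_sols_thm} applies in its degenerate branches.
\begin{itemize}
\item For $(2,-2)$ we have $a\neq b$, so $P_{2,-2}(1)=5^{n-3}=h^2$.
\item For $(2,2)$ we have $a=b$ and $n>3$, so $P_{2,2}(1)=5\,\mathcal{N}(n-3,-\det Q,5,1-4)=5\,\mathcal{N}(n-3,-\det Q,5,2)$.
\end{itemize}
Since $n-3=2m-2$ is even, Lemma~\ref{number_of_sols_a}(2) gives
$$
\mathcal{N}(n-3,-\det Q,5,2)=5^{n-4}-\mu\bigl((-1)^{m-1}(-\det Q)\bigr)5^{m-2}=5^{n-4}-\varepsilon 5^{m-2}.
$$
Therefore $P_{2,2}(1)=h^2-\varepsilon h$, and $p^1_{2,2}=2h^2-\varepsilon h$.

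\emph{Case $\alpha=2$.} Now $\delta=1-4=2\neq0$ and $\mu(\delta)=-1$. By Theorem~\ref{number_of_sols_thm} and Lemma~\ref{number_of_sols_a}(1), with $n-2$ odd,
$$
P_{a,b}(2)=h^2+\mu(1-\tau)\,\mu\bigl((-1)^{m-1}\delta\det Q\bigr)h .
$$
If $1-\tau=0$, the count of solutions of $Q(\mathbf{w})=0$ in odd dimension is $5^{n-3}$ by Lemma~\ref{number_of_sols_0}, which matches the convention $\mu(0)=0$. I would combine the characters as
$$
\mu(1-\tau)\mu(\delta)=\mu\bigl(\delta(1-\tau)\bigr)=\mu(\delta-a^2+4ab-b^2)=\mu(2-a^2-ab-b^2).
$$
Using $\mu((-1)^{m-1}\det Q)=\mu(-1)\varepsilon=\varepsilon$, this recovers the displayed formula
$$
P_{a,b}(2)=h^2+\varepsilon\,\mu(2-a^2-ab-b^2)\,h .
$$
For $(a,b)=(2,2)$ the argument is $2-12=-10=0$, so $P_{2,2}(2)=h^2$. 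For $(a,b)=(2,-2)$ the argument is $2-4=-2=3$, a non-square, so $P_{2,-2}(2)=h^2-\varepsilon h$. Hence $p^2_{2,2}=2h^2-\varepsilon h$.

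Combining the two cases gives $p^1_{22}=p^2_{22}=2\cdot5^{n-3}-\varepsilon5^{(n-3)/2}$. The only delicate point is the sign bookkeeping in the character arguments: the reduction $2ab\beta=4ab=-ab$ modulo $5$ for $\beta=2$, the identity $\mu(-1)=1$, and the matching of $\mu((-1)^{m-1}\cdot(\pm\det Q))$ with $\varepsilon$. I would check these carefully against the tabulated values $P_{a,b}(1)$ and $P_{a,b}(2)$ listed earlier.
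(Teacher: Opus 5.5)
Your proposal is correct and follows the paper's proof. Both use the sign symmetry to reduce $p^\alpha_{2,2}$ to $P_{2,2}(\alpha)+P_{2,-2}(\alpha)$, and then evaluate $P_{2,2}(1)=5^{n-3}-\varepsilon 5^{\frac{n-3}{2}}$, $P_{2,-2}(1)=5^{n-3}$, $P_{2,2}(2)=5^{n-3}$ and $P_{2,-2}(2)=5^{n-3}-\varepsilon 5^{\frac{n-3}{2}}$. The only difference is that the paper reads these values off its precomputed tables, whereas you rederive them from Theorem~\ref{number_of_sols_thm} and Lemmas~\ref{number_of_sols_0}--\ref{number_of_sols_a}, and your character bookkeeping there is right.
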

	\begin{proof}
		We need to show that
		$P_{2,2}(1) + P_{-2,2}(1) = P_{2,2}(2) + P_{2,-2}(2) = 2\cdot5^{n-3} - \varepsilon 5^{\frac{n-3}{2}}$.
		Indeed,  
		\begin{align*}
			P_{2,2}(1) &= 5^{n-3} - \varepsilon 5^{\frac{n-3}{2}}, \\
			P_{-2,2}(1) &= 5^{n-3}, \\
			P_{2,2}(2) &= 5^{n-3}, \\
			P_{2,-2}(2) &= 5^{n-3} - \varepsilon 5^{\frac{n-3}{2}}.
			\\
		\end{align*}
	\end{proof}

	Let $A_i$ denote the adjacency matrix of the relation ${\cal R}_i, i \in \{0,1,2\}$.
	Let $I$ denote the identity matrix of appropriate size.
	
	Then
	\begin{align*}
		A_2^2 &= k_2I + p_{2,2}^0A_0 + p_{2,2}^1A_1 + p_{2,2}^2A_2 \\
		&= k_2I + 2P_{2,2}(0)A_0 + (P_{2,2}(1) + P_{-2,2}(1))A_1 + (P_{2,2}(2) + P_{-2,2}(2))A_2 \\
		&= k_2I + 2(5^{n-3} - \varepsilon 5^{\frac{n-3}{2}})A_0 + (2\cdot5^{n-3} - \varepsilon 5^{\frac{n-3}{2}})(A_1 + A_2).
	\end{align*}
	Thus $A_2$ is the adjacency matrix of a Deza graph with parameters: 
	$$
	\left(
	\frac{5^{n-1} + \varepsilon 5^{\frac{n-1}{2}}}{2}, 
	5^{n-2} - \varepsilon 5^{\frac{n-3}{2}}, 
	2(5^{n-3} - \varepsilon 5^{\frac{n-3}{2}}), 
	2\cdot5^{n-3} - \varepsilon 5^{\frac{n-3}{2}}
	\right).
	$$
	
	We show that $A_0$ and $A_1 + A_2$ are adjacency matrices of complementary strongly regular graphs:
	\begin{align*}
		A_0^2 &= k_0I + p_{0,0}^0A_0 + p_{0,0}^1A_1 + p_{0,0}^2A_2 \\
		&= k_0I + \frac{P_{0,0}(0)}{2}A_0 +  \frac{P_{0,0}(1)}{2}A_1 +  \frac{P_{0,0}(2)}{2}A_2 \\
		&= \frac{5^{n-2} - \varepsilon 5^{\frac{n-3}{2}}}{2} I + \frac{5^{n-3} + \varepsilon 5^\frac{n-3}{2}}{2}A_0 +  \frac{5^{n-3} - \varepsilon 5^\frac{n-3}{2}}{2}(A_1 + A_2).
	\end{align*}
	
	Thus $A_0$ is the adjacency matrix of a strongly regular graph with parameters: 
	$$
	\left(
	\frac{5^{n-1} + \varepsilon 5^{\frac{n-1}{2}}}{2}, 
	\frac{5^{n-2} - \varepsilon 5^{\frac{n-3}{2}}}{2}, 
	\frac{5^{n-3} + \varepsilon 5^\frac{n-3}{2}}{2}, 
	\frac{5^{n-3} - \varepsilon 5^\frac{n-3}{2}}{2}
	\right).
	$$
	These parameters correspond to those of the graph $NO^\perp_\varepsilon(n,5)$.
	
	\section{Acknowledgements}
	The work was performed as part of research conducted
	in the Ural Mathematical Center with the financial support
	of the Ministry of Science and Higher Education of the Russian
	Federation (Agreement number № 075-02-2026-737).
	
	The author expresses deep gratitude to Natalia V. Maslova and V. V. Kabanov for their helpful comments, which improved this text.

\end{document}